 \numberwithin{equation}{section}
\newcounter{mtheorem}
\newtheorem{theorem}{Theorem}[section]
\newtheorem{lemma}[theorem]{Lemma}
\newtheorem{prop}[theorem]{Proposition}
\theoremstyle{definition}
\newtheorem{definition}[theorem]{Definition}
\theoremstyle{remark}
\newtheorem{remark}[theorem]{Remark}
\newcommand{\R}{\mathbb{R}}
\newcommand{\dist}{\operatorname{dist}}
\newcommand{\RD}{\mathbb{R}^D}
\newcommand{\RDD}{\mathbb{R}^D \times \mathbb{R}^D}
\newcommand{\calL}{\mathcal{L}}
\newcommand{\M}{\mathcal{M}}
\newcommand{\calS}{\mathbb{S}}
\newcommand{\lfields}{L^2(\mu)}
\newcommand{\calLone}{\mathcal{L}^1}
\newcommand{\bp}{\begin{proof}[\ensuremath{\mathbf{Proof}}]}
\newcommand{\bs}{\begin{proof}[\ensuremath{\mathbf{Solution}}]}
\newcommand{\ep}{\end{proof}}
\begin{document}

\title{ Moreau-Yosida approximation and convergence of Hamiltonian systems on Wasserstein space }

\author{Hwa Kil Kim\footnote{Courant Institute of Mathematical Sciences, New York University, 251 Mercer Street, New York, NY 10012, USA.
{\it Email address : hwakil@cims.nyu.edu}}}

\maketitle

\begin{abstract}

In this paper, we study the stability property of Hamiltonian systems on the Wasserstein space. Let $H$ be a given Hamiltonian satisfying
certain properties. We regularize $H$ using the Moreau-Yosida approximation and denote it by $H_\tau.$
We show that  solutions of the Hamiltonian system for $H_\tau$ converge to a solution of the Hamiltonian system for $H$ as $\tau$ converges to zero.
 We provide sufficient conditions on $H$ to carry out this process.
\end{abstract}

{\bf Key words.} Hamiltonian systems on Wasserstein space, Moreau-Yosida approximation, stability

\section{Introduction}

Let $\mathcal{M}$ be the set of Borel probability measures on $\RD$ with finite second moments equipped with the Wasserstein metric. We study a Hamiltonian type evolution problem in
$\mathcal{M}$ of following form :

\begin{equation}\label{eq1}
   \left \{
  \begin{array}{l}
 \frac{d}{dt}\mu_t + \nabla \cdot(\mathbb{J} v_t\mu_t) = 0, \qquad t \in (0 , T) \\

  v_t \in \partial _-  H(\mu _t)\cap T_{\mu_t}\mathcal{M}, \\
  \end{array}
  \right.
\end{equation}
where the given function $H : \mathcal{M} \rightarrow (-\infty, \infty]$ is referred to as a Hamiltonian. Here
$\mathbb{J}: \mathbb{R}^D \rightarrow \mathbb{R}^D$ is a matrix satisfying $\mathbb{J}v\bot v$
 for all $v\in \mathbb{R}^D$. When $D=2d$ then we can simply set $\mathbb{J}$ to be the $(2d) \times (2d)$ canonical symplectic matrix. Here, $\partial_-H(\mu)$
 denotes the subdifferential of $H$ at $\mu \in \mathcal{M}$ and $T_\mu \mathcal{M}$ is the tangent space at $\mu$ in $\mathcal{M}$
 which will be defined below. There are various reasons for the terminology  is {\it Hamiltonian} type. For example, (\ref{eq1}) is, roughly speaking, a limit of finite dimensional
 Hamiltonian ODE \cite{CGP:article}. Geometric justification was also made in \cite{GKP}.

 The first systematic study addressing evolution problems on $\mathcal{M}$ of the Hamiltonian type was made by Ambrosio and Gangbo
\cite{AG}. They studied the Hamiltonian system for locally subdifferentiable Hamiltonians and proved the existence of a solution. The theory in \cite{AG} covers a large class of systems which have recently generated a lot of interest, including the
Vlasov-Poisson in one space dimension \cite{Brenier}\cite{MZ}, the Vlasov-Monge-Ampere \cite{BL:article}\cite{CGP:article} and the semigeostophic systems
 \cite{BB:article}\cite{Cullen-feldman}\cite{cullen-gangbo}\cite{CGP:article} are casted into the Hamiltonian type formalism.

We are interested in the stability property of Hamiltonian systems in the following sense. Let $H$ be a given Hamiltonian. We ask ourselves whether
there is any {\it regular} approximation $H_\tau$ of $H$ such that solutions of (\ref{eq1}) for $H_\tau$ exist and converge to a solution of
the system (\ref{eq1}) for $H$ as the approximation parameter $\tau$ goes to zero. 

 Since the Wasserstein space is an infinite dimensional metric space, the existence of such an approximation is not a simple question. In this paper, we
 show that the Moreau-Yosida approximation is the one we are looking for. Let $H$ be a Hamiltonian satisfying assumptions (H1) and (H2) whose statements
  will be given later. We first regularize the Hamiltonian $H$ to obtain $H_{\tau}$ defined by
 $$H_{\tau}(\mu) = \inf_{\nu\in \mathcal{M}} \{1/2\tau W(\mu,\nu)^2 + H(\nu)   \}.$$
 The new functional $H_\tau$ is $1/\tau-$concave even if $H$ is not. Next, we apply the algorithm developed in \cite{AG} to solve
\begin{equation}
   \left \{
  \begin{array}{l}
 \frac{d}{dt}\mu_t^\tau + \nabla \cdot(\mathbb{J} v_t^\tau\mu_t^\tau) = 0, \qquad t \in (0 , T) \\

  v_t^\tau \in \partial^+  H_\tau(\mu _t^\tau)\cap T_{\mu_t^\tau}\mathcal{M}, \\
  \end{array}
  \right.
\end{equation}
where $\partial^ + H_\tau(\mu_t^\tau)$ is the superdifferential of $H_\tau$ at $\mu_t^\tau$ in the sense of \cite{ags:book}.
Finally, we show, for any sequence $\tau_n$ converging to zero, $\mu^{\tau_n}$
(up to subsequence) converges to $\mu$ which is a solution of (\ref{eq1}).

Our assumptions on the Hamiltonian $H$ allow $H$ to be no locally subdifferentiable.
Hence, our stability result allow us to construct solutions to the system (\ref{eq1}) for Hamiltonians which are not everywhere subdifferentiable around
the initial measure. This is not the case in \cite{AG}. At the end of this paper, we will discuss more about how the Moreau-Yosida approximation scheme
is useful in the study of non locally subdifferentiable Hamiltonians.

We briefly summarize the contents of each section. Section 2 is a preliminary on the Wasserstein space $\M.$ In section 3, we give an introduction to
the  Moreau-Yosida approximation of functionals defined on $\mathcal{M}$ and investigate some properties of it. The main feature in this section is
 Lemma \ref{lemma:stability} which is the key ingredient to prove Theorem \ref{thm-hf}.
 In section 4, we prove our main stability result Theorem \ref{thm-hf} under assumptions (H1) and (H2) on the Hamiltonian $H.$ We show Hamiltonians
 considered in \cite{AG} satisfy (H1) and (H2), and so corresponding Hamiltonian systems are stable w.r.t Moreau-Yosida approximation.
Let us close this introduction by fixing notations and terminologies.


\subsection{Notation and Terminology}

- $\mathcal{P}(\RD)=\{\mu| \mu$ is a \textit{Borel probability measure} on $\mathbb{R}^{D}\}$\\
- Let $\M$ be the subspace of $\mathcal{P}(\mathbb{R}^D)$ with bounded second moment, \textit{i.e.}
$$\M:=\Bigl \{\mu \in\mathcal{P}(\RD): \mu\geq 0, \int_{\RD}d\mu=1, \int_{\RD}|x|^2\,d\mu<\infty \Bigr \}.$$
- Let $\mu\in \mathcal{P}(\RD)$ and let $f: \RD \rightarrow \R^k$ be a \textit{ Borel map}. Then $\nu:=f_{\#}\mu$
is a \textit{Borel measure} on $\R^k$ characterized by $\nu[B] = \mu[f^{-1}(B)]$ for all
\textit{Borel sets} $B\subset\R^k$. In this case, we say $f$ {\it pushes} $\mu$ {\it forward to} $\nu$. \\
- $C_c^\infty(\RD) $ is the collection of all infinitely differentiable  functions with compact
support.\\
- We denote $C_b(\RD) $ the collection of all continuous and bounded  functions.\\
- Let $\mu_n, \mu\in \mathcal{P}(\RD),$ we define $\mu_n$ converges {\it narrowly} to $\mu$ if
$$\int_{\RD} f(x) d\mu_n(x) \longrightarrow \int_{\RD}f(x) d\mu(x) \quad {\rm as} \quad n\rightarrow \infty,$$
for any $f \in C_b(\RD),$ {\it i.e.} $\mu_n$ weak* converges to $\mu.$  \\
- $Id : \RD\rightarrow \RD$ is the identity map,{\it i.e.} $Id(x) =x$ for all $x\in \RD.$\\
- $\pi^i, \pi^{i,j} : \R^{nD} \rightarrow \RD,\RDD$ are the standard projections, {\it i.e.}
$$\pi^i(x_1,x_2,\cdots,x_n)=x_i \quad \mathrm{and} \quad \pi^{i,j}(x_1,x_2,\cdots,x_n)=(x_i,x_j).$$
- Let $\mu\in \mathcal{P}(\RD)$ and let $f:\RD\rightarrow \R^k.$ We denote
the $L^2$ norm of $f$ by $||f||_\mu,$  {\it i.e.}
$$     ||f||^2_\mu:=||f||^2_{L^2(\mu)}= \int_{\RD} |f(x)|^2 d\mu(x) .   $$
- Let $\mu\in \mathcal{P}(\RD)$, we denote the support of $\mu$ by $supp(\mu).$\\
- Let $r >0$ and $x\in \RD$ then $B_x(r)$ denotes the open ball in $\RD$ of center $x$ and radius $r$.\\
- Let $x,y \in \RD$, we denote the inner product of $x$ and $y$ by $<x,y>$.\\

\section{Wasserstein space}
Recall that  $\M$ is the subspace of $\mathcal{P}(\mathbb{R}^D)$ with bounded second moment. In this section, we show that $\mathcal{M}$ has a metric
structure and we introduce a differentiable structure in $\mathcal{M}$. We refer to \cite{ags:book} and \cite{Villani:book} for further details.
\subsection{Wasserstein distance}

\begin{definition}\label{def:wasserstein}

Let $\mu,\,\nu\in\M.$ Consider
\begin{equation} \label{eq:wasserstein}
W_2(\mu,\nu):= \Bigl (\inf_{\gamma\in\Gamma(\mu,\nu)}\int_{\RD\times\RD} |x-y|^2 d\gamma(x,y)\Bigr)^{1/2}.
\end{equation}
Here, $\Gamma(\mu, \nu)$ denotes the set of Borel measures $\gamma$ on $\RD \times \RD$ which have
$\mu$ and $\nu$ as \textit{marginals}, \textit{i.e.} satisfying
$\pi^1_{\#}(\gamma)=\mu$ and $\pi^2_{\#}(\gamma)=\nu$.

Equation (\ref{eq:wasserstein}) defines a metric on $\M$ which is called the \textit{Wasserstein distance}.
It is known that the infimum in the right hand side of equation (\ref{eq:wasserstein}) is always achieved.
We will denote by $\Gamma_o(\mu,\nu)$ the set of $\gamma$ which achieve the minimum in (\ref{eq:wasserstein}).

\end{definition}

\begin{definition}\label{barycentricprojection}
Let $\mu,\nu\in \M$ and  $\gamma\in \Gamma_o(\mu,\nu).$
The \textit{ barycentric projection} $\bar{\gamma}_\mu^\nu : \RD \rightarrow \RD$ of $\gamma$ with respect
to the first marginal $\mu$ is characterized by
\begin{equation}\label{barycentric:firstmarginal}
\int_{\RD} \psi(x)\bar{\gamma}_{\mu}^{\nu}(x)d\mu(x) = \int_{\R^{2D}} \psi(x)yd\gamma(x,y) \quad
\forall \psi \in C_b.
\end{equation}

\noindent
Similarly, the \textit{ barycentric projection} $\bar{\gamma}_\nu^\mu : \RD \rightarrow \RD$ of $\gamma$ with respect
to the second marginal $\nu$ is defined by
\begin{equation}\label{barycentric:secondmarginal}
\int_{\RD} \psi(y)\bar{\gamma}^{\mu}_{\nu}(y)d\nu(y) = \int_{\R^{2D}} \psi(y)xd\gamma(x,y) \quad
\forall \psi \in C_b.
\end{equation}
\end{definition}

%

\subsection{Differential structure on $\mathcal{M}$}

\begin{definition} \label{def:tangentspaces}
Given $\mu \in \M$, let $T_\mu \M$ be the \textit{tangent space} of $\M$ at $\mu$ defined as the closure of $\nabla C_c^\infty$ in $\lfields$, {\it i.e.}
\begin{equation*}
T_\mu\mathcal{M}:= \overline{\{\nabla \varphi : \varphi\in C_c^\infty(\RD) \}}^{L^2(\mu)}.
\end{equation*}

For any $\mu \in \mathcal{M}$, there is an orthogonal decomposition
 \begin{equation}\label{orthogonal:decomposition}
L^2(\mu)=T_\mu \mathcal{M} \oplus [T_\mu \mathcal{M}]^\bot,
 \end{equation}
 where $[T_{\mu}\mathcal{M}]^\bot:= \{w \in L^2(\mu): \nabla \cdot (w\mu) = 0    \} .$  We will denote by $\pi_\mu : L^2(\mu) \rightarrow
 T_\mu \mathcal{M}$ the corresponding orthogonal projection.
\end{definition}

As shown in \cite{ags:book}, the tangent space enjoys many useful properties in analytic and geometric point of view. Here, we
 recall one of them which is related to absolutely continuous curves in $\mathcal{M}.$ Let us first give the definition of absolutely continuous curves in metric spaces.

%
%
%

\begin{definition}\label{deac2curves}

Let $(\calS, \dist)$ be a metric space. A curve $t \in (a,b)
\mapsto \sigma_t \in \calS$ is \textit{2--absolutely continuous}
if there exists $\beta \in L^2(a,b)$ such that
\begin{equation}
\dist(\sigma_t, \sigma_s) \leq \int_s^t \beta(\tau)d\tau,
\end{equation}
\noindent
for all $a <s<t<b.$ We then write $\sigma \in AC_2(a,b; \calS).$
For such curves the limit $|\sigma'|(t):=\lim_{s \rightarrow t}
\dist(\sigma_{t}, \sigma_s)/|t-s| $ exists for $\calL^1$--almost
every $t \in (a,b)$. We call this limit the \textit{metric
derivative} of $\sigma$ at $t.$ It satisfies $|\sigma'| \leq
\beta$ $\calL^1$--almost everywhere.

\end{definition}
\noindent
We now recall Theorem 8.3.1 in \cite{ags:book}, which says that the tangent space provides a canonical velocity field for the
absolutely continuous curves in $\M.$
%
%

\begin{prop}\label{prac2curves}

If $\mu \in AC_2(a,b; \M)$ then there exists a Borel map $v:
(a,b)\times\RD\rightarrow\RD$ such that
 $v_t \in L^2(\mu_t)$ for $\calLone$--almost every $t \in (a,b)$ and
 $$\frac{\partial\,\mu_t}{\partial t}+\nabla \cdot (v_t \mu_t)=0.$$
 We call
$v$ a \textit{velocity} for $\mu.$ If $w$ is another velocity
for $\mu$ then $\pi_{\mu_t}(v_t)=\pi_{\mu_t}(w_t)$ for $\calLone$--almost every $t \in
(a,b),$ where $\pi_{\mu_t}$ is defined in Definition \ref{def:tangentspaces}. Moreover, one can choose $v$ such that $ v_t  \in  T_{\mu_t}\M$
and $||v_t||_{\mu_t}=|\mu'|(t)$ for $\calL^1$--almost every
$t \in (a,b)$. In that case, for $\calL^1$--almost every $t \in
(a,b)$, $v_t$ is uniquely determined. We refer to $ v$ as the \textit{velocity of minimal
norm}, since if $w$ is any other velocity associated to $\mu$
then $||v_t||_{\mu_t} \leq ||w_t||_{\mu_t}$ for
$\calL^1$--almost every $t \in (a,b)$ and so
$\dist(\mu_t, \mu_s) \leq \int_s^t ||v_\tau||_{\mu_\tau}d\tau \leq
\int_s^t || w_\tau||_{\mu_\tau}d\tau$
for all $a <s<t<b.$

\end{prop}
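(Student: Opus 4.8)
The plan is to produce the velocity field by duality, representing the infinitesimal change of $\mu_t$ against smooth test functions as an $L^2$ inner product with a tangent vector. First I would fix $\varphi \in \ccrD$ and study the scalar function $t \mapsto \int_{\RD}\varphi\,d\mu_t$. Using any optimal plan in $\Gamma_o(\mu_s,\mu_t)$ together with the first-order expansion of $\varphi$ along the plan and Cauchy--Schwarz, I get
$$\Bigl|\int_{\RD}\varphi\,d\mu_t - \int_{\RD}\varphi\,d\mu_s\Bigr| \le \|\nabla\varphi\|_{\mu_s}\,\dist(\mu_s,\mu_t),$$
so by $2$-absolute continuity of $\mu$ this scalar function lies in $AC_2$ and is differentiable at $\calLone$-a.e. $t$. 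Refining the same estimate by localizing $\nabla\varphi$ along the optimal plan upgrades this to the pointwise bound
$$\Bigl|\frac{d}{dt}\int_{\RD}\varphi\,d\mu_t\Bigr| \le |\mu'|(t)\,\|\nabla\varphi\|_{\mu_t}$$
for $\calLone$-a.e. $t$, with $|\mu'|$ the metric derivative from Definition \ref{deac2curves}.

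For a.e. fixed $t$, the assignment $\nabla\varphi \mapsto \frac{d}{dt}\int\varphi\,d\mu_t$ is then a well-defined linear functional on $\gradfields$ (the bound shows it depends only on $\nabla\varphi$ in $L^2(\mu_t)$), bounded by $|\mu'|(t)$ in the $L^2(\mu_t)$-norm. It extends continuously to the closure $T_{\mu_t}\M$, so the Riesz representation theorem furnishes a unique $v_t \in T_{\mu_t}\M$ with
$$\frac{d}{dt}\int_{\RD}\varphi\,d\mu_t = \int_{\RD}\langle v_t, \nabla\varphi\rangle\,d\mu_t, \qquad \|v_t\|_{\mu_t}\le|\mu'|(t).$$
This identity is exactly the weak formulation of $\partial_t\mu_t + \nabla\cdot(v_t\mu_t)=0$. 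Joint Borel measurability of $(t,x)\mapsto v_t(x)$ I would secure by running the construction on a countable dense family of test functions and invoking a measurable-selection argument for the projection $\pi_{\mu_t}$.

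Uniqueness up to projection and minimality follow formally. If $w$ is any other velocity, subtracting the two continuity equations gives $\int_{\RD}\langle v_t - w_t, \nabla\varphi\rangle\,d\mu_t = 0$ for all $\varphi \in \ccrD$, so $v_t - w_t \in [T_{\mu_t}\M]^\perp$ in the decomposition (\ref{orthogonal:decomposition}); hence $\pi_{\mu_t}(v_t)=\pi_{\mu_t}(w_t)$. Since the constructed $v_t$ already lies in $T_{\mu_t}\M$, it equals $\pi_{\mu_t}(w_t)$, and the Pythagorean identity $\|w_t\|_{\mu_t}^2 = \|v_t\|_{\mu_t}^2 + \|w_t - v_t\|_{\mu_t}^2$ shows $v_t$ is the velocity of minimal norm and is uniquely determined.

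The one remaining point, which I expect to be the main obstacle, is the reverse inequality $|\mu'|(t) \le \|v_t\|_{\mu_t}$; combined with the upper bound above it yields $\|v_t\|_{\mu_t}=|\mu'|(t)$. The difficulty is that the Riesz construction controls the metric derivative only from above, whereas the lower bound requires showing that solving the continuity equation with an $L^2(\mu_t)$ field genuinely produces curves of the expected Wasserstein length, i.e. $\dist(\mu_s,\mu_t)\le\int_s^t\|v_\tau\|_{\mu_\tau}\,d\tau$. I would establish this by mollifying $v_t$ in space so the associated flow map is well-defined, transporting $\mu_s$ along the smoothed flow to build a competitor coupling in $\Gamma(\mu_s,\mu_t)$, estimating its transport cost by $\int_s^t\|v_\tau\|_{\mu_\tau}\,d\tau$, and then removing the regularization. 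Dividing by $|t-s|$ and letting $s\to t$ gives the lower bound, and the final chain of inequalities for $\dist(\mu_t,\mu_s)$ in the statement is an immediate consequence.
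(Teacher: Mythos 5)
The paper does not actually prove this proposition; it is recalled verbatim as Theorem 8.3.1 of \cite{ags:book}, so there is no in-paper argument to compare against. Your proposal reconstructs, in outline, exactly the standard proof from that reference: the duality/Riesz construction on $T_{\mu_t}\M$ giving existence of $v_t$ with $\|v_t\|_{\mu_t}\le|\mu'|(t)$, the orthogonality argument showing any two velocities differ by an element of $[T_{\mu_t}\M]^\perp$ (hence uniqueness of the tangential part and minimality via Pythagoras), and the benchmark inequality $W_2(\mu_s,\mu_t)\le\int_s^t\|v_r\|_{\mu_r}\,dr$ to close the reverse bound. You have also correctly identified that last inequality as the genuinely hard step. (A minor caveat on the first step: the bound $|\int\varphi\,d\mu_t-\int\varphi\,d\mu_s|\le\|\nabla\varphi\|_{\mu_s}\dist(\mu_s,\mu_t)$ is not literally true for fixed $s,t$ --- the mean-value point $\xi_{x,y}$ lives on the segment, not at $x$ --- but the Lipschitz bound suffices for absolute continuity and the $L^2$ bound holds in the limit $s\to t$, which is all you use.)

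The one step that, as literally described, would fail is the regularization in the final paragraph. If you mollify only the vector field $v_t$, the curve $\mu_t$ no longer solves the continuity equation driven by the mollified field, so pushing $\mu_s$ forward along the smoothed flow does not land on $\mu_t$ and produces no admissible coupling in $\Gamma(\mu_s,\mu_t)$. The standard fix is to mollify the pair consisting of the measure and the momentum: set $\mu_t^\epsilon:=\mu_t*\rho_\epsilon$ and $v_t^\epsilon:=\bigl((v_t\mu_t)*\rho_\epsilon\bigr)/\mu_t^\epsilon$. This preserves the continuity equation, satisfies $\|v_t^\epsilon\|_{\mu_t^\epsilon}\le\|v_t\|_{\mu_t}$ by Jensen's inequality applied to the jointly convex functional $(\rho,E)\mapsto\int|E|^2/\rho$, and $v^\epsilon$ is smooth enough that its flow maps $X_{s,t}^\epsilon$ exist; the coupling $(X_{s,s}^\epsilon,X_{s,t}^\epsilon)_{\#}\mu_s^\epsilon$ then gives $W_2(\mu_s^\epsilon,\mu_t^\epsilon)\le\int_s^t\|v_r^\epsilon\|_{\mu_r^\epsilon}\,dr$, and one concludes by lower semicontinuity of $W_2$ under narrow convergence as $\epsilon\to0$. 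With that correction your argument is complete and coincides with the proof in \cite{ags:book}.
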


%
%
%

Following \cite{AG}, we give a notion of a differential and a definition of convex functions on $\M$.

\begin{definition} \label{def:Fdifferentiable}
Let $H:\M\rightarrow (-\infty,\infty]$ be a proper function on $\M$, {\it i.e.} the effective domain of $H$ defined by
$D(H):=\{\mu \in \mathcal{M} : H(\mu) < \infty \}$ is not empty. We say that
$\xi \in L^2(\mu)$ belongs to the \textit{subdifferential}
$\partial_{-} H(\mu)$ if
$$ H(\nu) \geq H(\mu) + \sup_{\gamma \in \Gamma_o(\mu,\nu)}\int_{\RD\times \RD} \langle\xi(x),y-x\rangle d\gamma(x,y) + o(W_2(\mu,\nu)) ,$$
as $\nu \rightarrow \mu.$ We denote the domain of subdifferential by $D(\partial_-H):=\{\mu: \partial_-H(\mu)\neq \emptyset   \}$.\\
 If $-\xi \in \partial_{-} (-H)(\mu)$ then
we say that $\xi$ belongs to the \textit{superdifferential}
$\partial^+ H(\mu)$.
 \end{definition}

 \begin{remark}
If $ \partial_{-} H(\mu) \cap \partial^+ H(\mu)\neq \emptyset$
then we say that $H$ is \textit{differentiable} at
$\mu.$ In this case, there is a unique vector in $\partial_{-} H(\mu) \cap
\partial^+ H(\mu)\cap T_{\mu}\M$ and we define the \textit{gradient vector} $\nabla_\mu H$ by the unique vector.

\end{remark}



%

\begin{definition}\label{lambdaconvexity}
Let $H: \M \rightarrow (-\infty,\infty]$ be proper and let $\lambda \in \R.$ We say that $H$ is \textit{$\lambda$- convex} if for every
$\mu_0,\mu_1\in \M$ and every optimal transport plan $\gamma\in \Gamma_o(\mu_0,\mu_1)$ we have
 \begin{equation}
H(\mu_t)\leq (1-t)H(\mu_0) + t H(\mu_1) -\frac{\lambda}{2}t(1-t)W_2^2(\mu_0,\mu_1) \quad \forall t\in[0,1],
\end{equation}
where $\mu_t = ((1-t)\pi^1 + t\pi^2)_{\#}\gamma$. If $-H$ is $(-\lambda)$-convex then $H$ is called \textit{$\lambda$- concave}.
\end{definition}

%
%


\section{Moreau-Yosida approximation}
In this section, we introduce the Moreau-Yosida approximation of functionals on $\mathcal{M}.$ 
\begin{definition}
 Let $H : \M \rightarrow (-\infty, \infty]$ be a proper and coercive functional. For $\tau >0$, the {\it Moreau-Yosida
approximation} $H_{\tau}$ of $H$ is defined as
\begin{equation}\label{MY-eq1}
H_{\tau}(\mu)  = \inf_{\nu\in \M}\Bigl \{\frac{1}{2\tau} W_2^2(\mu,\nu) + H(\nu)\Bigr \},
\end{equation}
Here, $H$ is coercive means that there exist $\tau_*>0$ and $\mu_*\in\M$ so that $H_{\tau_*}(\mu_*)> -\infty.$ We also set
 \begin{equation}
\it{J}_{\tau}[\mu] := \Bigl \{ \mu_{\tau} : H_\tau(\mu)= \frac{1}{2\tau} W_2^2(\mu,\mu_\tau) + H(\mu_\tau) \Bigr \}.
\end{equation}

\end{definition}


\begin{lemma}\label{lemma:concave}
Let $H : \M \rightarrow (-\infty, \infty]$ be a proper and coercive functional, and $H_{\tau}$ be the
Moreau-Yosida approximation of $H$. Then $H_{\tau}$ is
$\frac{1}{\tau} $ - concave.
\end{lemma}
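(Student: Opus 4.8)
The plan is to unwind the definition and reduce the statement to one geometric inequality. By Definition~\ref{lambdaconvexity}, saying that $H_\tau$ is $\frac1\tau$-concave means that $-H_\tau$ is $(-\frac1\tau)$-convex, which after multiplying by $-1$ amounts to proving, for every $\mu_0,\mu_1\in\M$, every optimal plan $\gamma\in\Gamma_o(\mu_0,\mu_1)$, and the induced curve $\mu_t=((1-t)\pi^1+t\pi^2)_\#\gamma$, the lower bound
\[
H_\tau(\mu_t)\ \ge\ (1-t)H_\tau(\mu_0)+tH_\tau(\mu_1)-\frac{1}{2\tau}\,t(1-t)W_2^2(\mu_0,\mu_1),\qquad t\in[0,1].
\]
Since $H_\tau$ is the infimum over $\nu$ of the functionals $f_\nu(\mu):=\frac{1}{2\tau}W_2^2(\mu,\nu)+H(\nu)$, and adding the constant $H(\nu)$ does not affect concavity, I would first show that each $\mu\mapsto\frac{1}{2\tau}W_2^2(\mu,\nu)$ is $\frac1\tau$-concave along $\mu_t$, and then show that $\frac1\tau$-concavity is preserved under taking infima.

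The heart of the argument is the geometric claim that, for the fixed curve $\mu_t$ above and any $\nu\in\M$,
\[
W_2^2(\mu_t,\nu)\ \ge\ (1-t)W_2^2(\mu_0,\nu)+tW_2^2(\mu_1,\nu)-t(1-t)W_2^2(\mu_0,\mu_1).
\]
To prove this I would fix an optimal plan $\beta\in\Gamma_o(\mu_t,\nu)$ and glue it to $\gamma$ along their common marginal $\mu_t$, identifying a generic point of $\mu_t$ with $(1-t)x+ty$ for $(x,y)$ distributed according to $\gamma$. This produces a measure $\lambda$ on $\RDDD$ whose $(x,y)$-marginal is $\gamma$ and whose $((1-t)x+ty,\,z)$-marginal is $\beta$. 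Integrating the pointwise Euclidean identity
\[
|(1-t)x+ty-z|^2=(1-t)|x-z|^2+t|y-z|^2-t(1-t)|x-y|^2
\]
against $\lambda$ then produces $W_2^2(\mu_t,\nu)$ on the left (because $\beta$ is optimal); on the right, the $|x-y|^2$ term integrates exactly to $W_2^2(\mu_0,\mu_1)$ (because $\gamma$ is optimal), while the $|x-z|^2$ and $|y-z|^2$ terms integrate against couplings of $(\mu_0,\nu)$ and $(\mu_1,\nu)$ and are therefore bounded below by $W_2^2(\mu_0,\nu)$ and $W_2^2(\mu_1,\nu)$. Dividing by $2\tau$ yields the $\frac1\tau$-concavity of each $f_\nu$ along $\mu_t$.

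Finally I would pass to the infimum. Since each $f_\nu$ obeys its concavity inequality along the \emph{same} curve $\mu_t$ and the correction term $-\frac{1}{2\tau}t(1-t)W_2^2(\mu_0,\mu_1)$ is independent of $\nu$, the elementary estimate $\inf_\nu[(1-t)f_\nu(\mu_0)+tf_\nu(\mu_1)]\ge(1-t)\inf_\nu f_\nu(\mu_0)+t\inf_\nu f_\nu(\mu_1)$ transfers the inequality from the $f_\nu$ to $H_\tau=\inf_\nu f_\nu$, giving exactly the desired bound. The main obstacle I anticipate is the rigorous construction of the glued measure $\lambda$: the disintegration of $\gamma$ through the map $(x,y)\mapsto(1-t)x+ty$ and its measurable gluing with $\beta$ along the shared marginal $\mu_t$. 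A secondary point is bookkeeping to keep the manipulations valid when $H$ attains $+\infty$; here coercivity guarantees $H_\tau>-\infty$, so the right-hand sides never take the indeterminate form $\infty-\infty$.
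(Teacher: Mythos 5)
Your proof is correct and follows the same route as the paper: both reduce the claim to the $\frac{1}{\tau}$-concavity of each $\mu\mapsto\frac{1}{2\tau}W_2^2(\mu,\nu)+H(\nu)$ along the geodesic $\mu_t=((1-t)\pi^1+t\pi^2)_{\#}\gamma$ and then observe that an infimum of $\frac{1}{\tau}$-concave functionals (with the $\nu$-independent correction term) is again $\frac{1}{\tau}$-concave. The only difference is that the paper cites the $1$-concavity of $\mu\mapsto\frac{1}{2}W_2^2(\mu,\nu)$ as well known (it is Theorem 7.3.2 in \cite{ags:book}), whereas you prove it via the gluing construction and the identity $|(1-t)x+ty-z|^2=(1-t)|x-z|^2+t|y-z|^2-t(1-t)|x-y|^2$, which is exactly the standard argument behind that cited fact.
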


\begin{proof}
Let $\nu\in \M$ be fixed, then it is well known that $\mu\mapsto \frac{1}{2}W_2^2(\mu,\nu)$ is a $1$-concave function on $\mathcal{M}$. This implies

$$ \mu\rightarrow H_\tau(\mu)= \inf_{\nu\in\mathcal{M}}\Bigl \{\frac{1}{2\tau}W_2^2(\mu,\nu) + H(\nu)  \Bigr \},$$
is $\frac{1}{\tau}-$concave since it is an infimum of $\frac{1}{\tau}-$concave functionals.
\end{proof}

We now introduce two Lemmas which give relations between the subdifferential of $H$ and the superdifferential of $H_\tau$.
They play the key role in the convergence of Hamiltonian systems.

\begin{lemma}\label{MYSP}
Let $H : \M \rightarrow (-\infty, \infty]$ be a proper functional and $H_\tau$ be the Moreau-Yosida approximation of $H$. For $\mu_o\in\M$ given,
if $\nu_o \in J_{\tau}[\mu_o]$ then
$H_\tau$ is superdifferentiable at $\mu_o$ and $H$ is subdifferentiable at $\nu_o,$ $i.e.$ $\mu_o\in D(\partial^+H_\tau)$ and $\nu_o\in D(\partial_-H)$.
Furthermore, for any $\gamma \in \Gamma_o(\mu_o,\nu_o)$, we have
\begin{equation}\label{subdiff:superdiff:lemma:MYSP}
\frac{Id-\bar{\gamma}_{\mu_o}^{\nu_o}}{\tau}\in \partial^+H_\tau(\mu_o) \cap T_{\mu_o}\mathcal{M} \quad ,
\quad \frac{\bar{\gamma}_{\nu_o}^{\mu_o}-Id}{\tau}\in \partial_-H(\nu_o) \cap T_{\nu_o}\mathcal{M}
\end{equation}
where $\bar{\gamma}_{\mu_o}^{\nu_o}$($\bar{\gamma}_{\nu_o}^{\mu_o}$) is the barycentric
projection of $\gamma$ with respect to the first(respectively, second) marginal as in Definition \ref{barycentricprojection}.
\end{lemma}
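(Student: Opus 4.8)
The plan is to verify the two membership statements in \eqref{subdiff:superdiff:lemma:MYSP} directly from Definition \ref{def:Fdifferentiable}, using crucially that $\nu_o$ is a minimizer in the infimum defining $H_\tau(\mu_o)$. The two inclusions follow from symmetric ``gluing'' arguments, differing only in which variational inequality is available: minimality of $\nu_o$ yields a \emph{lower} bound on $H(\rho)-H(\nu_o)$, which is exactly what a subdifferential of $H$ needs, while the infimum defining $H_\tau$ yields an \emph{upper} bound on $H_\tau(\rho)-H_\tau(\mu_o)$, which is what a superdifferential of $H_\tau$ needs. Fix $\gamma\in\Gamma_o(\mu_o,\nu_o)$ and set $\xi=(Id-\bar{\gamma}_{\mu_o}^{\nu_o})/\tau$ and $\eta=(\bar{\gamma}_{\nu_o}^{\mu_o}-Id)/\tau$. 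Both are square integrable since conditional expectation is an $L^2$ contraction, so $\|\bar{\gamma}_{\mu_o}^{\nu_o}\|_{\mu_o}^2\le\int|y|^2d\gamma<\infty$ and likewise for $\bar{\gamma}_{\nu_o}^{\mu_o}$, and $Id$ is square integrable because $\mu_o,\nu_o$ have finite second moment.

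For the subdifferential claim I would fix an arbitrary $\rho\in\M$ and any $\sigma\in\Gamma_o(\nu_o,\rho)$, and glue $\gamma$ and $\sigma$ along their common marginal $\nu_o$ to obtain $\alpha\in\mathcal{P}(\RDDD)$ with $\pi^{1,2}_\#\alpha=\gamma$ and $\pi^{2,3}_\#\alpha=\sigma$. Since $\pi^{1,3}_\#\alpha$ is admissible for $(\mu_o,\rho)$ we get $W_2^2(\mu_o,\rho)\le\int|x-z|^2d\alpha$, while $W_2^2(\mu_o,\nu_o)=\int|x-y|^2d\alpha$. Minimality of $\nu_o$ gives $H(\rho)-H(\nu_o)\ge\frac{1}{2\tau}\bigl[W_2^2(\mu_o,\nu_o)-W_2^2(\mu_o,\rho)\bigr]$, and the pointwise identity $|x-y|^2-|x-z|^2=2\langle x-y,z-y\rangle-|z-y|^2$ converts the bracket into $\frac{1}{\tau}\int\langle x-y,z-y\rangle d\alpha-\frac{1}{2\tau}W_2^2(\nu_o,\rho)$. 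Disintegrating $\alpha$ over the middle variable $y$ and using that the $x$- and $z$-fibres are then independent, the first integral collapses, via the defining property of the barycentric projection $\int_x x\,d\gamma^y(x)=\bar{\gamma}_{\nu_o}^{\mu_o}(y)$, to $\int\langle\bar{\gamma}_{\nu_o}^{\mu_o}(y)-y,z-y\rangle d\sigma=\tau\int\langle\eta,z-y\rangle d\sigma$. Hence $H(\rho)-H(\nu_o)\ge\int\langle\eta,z-y\rangle d\sigma-\frac{1}{2\tau}W_2^2(\nu_o,\rho)$ for \emph{every} $\sigma\in\Gamma_o(\nu_o,\rho)$; taking the supremum over $\sigma$ on the right and noting $\frac{1}{2\tau}W_2^2(\nu_o,\rho)=o(W_2(\nu_o,\rho))$ gives $\eta\in\partial_-H(\nu_o)$.

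The superdifferential claim for $H_\tau$ is the mirror image. For $\rho\to\mu_o$ and any $\sigma\in\Gamma_o(\mu_o,\rho)$ I would glue $\gamma$ and $\sigma$ along $\mu_o$; testing the infimum defining $H_\tau(\rho)$ against the competitor $\nu_o$ gives the \emph{upper} bound $H_\tau(\rho)\le\frac{1}{2\tau}W_2^2(\rho,\nu_o)+H(\nu_o)$, which combined with $H_\tau(\mu_o)=\frac{1}{2\tau}W_2^2(\mu_o,\nu_o)+H(\nu_o)$ yields $H_\tau(\rho)-H_\tau(\mu_o)\le\frac{1}{2\tau}\bigl[W_2^2(\rho,\nu_o)-W_2^2(\mu_o,\nu_o)\bigr]$. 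The analogous expansion $|y-w|^2-|x-y|^2=2\langle x-y,w-x\rangle+|w-x|^2$, followed by disintegration over the first variable $x$ (which produces $\bar{\gamma}_{\mu_o}^{\nu_o}$ and hence $\xi$), leads to $H_\tau(\rho)-H_\tau(\mu_o)\le\int\langle\xi,w-x\rangle d\sigma+\frac{1}{2\tau}W_2^2(\mu_o,\rho)$ for every $\sigma$; taking the infimum over $\sigma$ and absorbing the quadratic error into $o(W_2(\mu_o,\rho))$ gives $\xi\in\partial^+H_\tau(\mu_o)$.

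Finally, for the tangent-space membership I would note that $Id\in T_{\mu_o}\M$ and $Id\in T_{\nu_o}\M$ (approximate $|x|^2/2$ by smooth compactly supported cutoffs, the error controlled by the finite second moment), and that the barycentric projection of an \emph{optimal} plan lies in the tangent space, a standard consequence of the cyclical monotonicity of $\gamma$ recorded in \cite{ags:book}; thus $\bar{\gamma}_{\mu_o}^{\nu_o}\in T_{\mu_o}\M$ and $\bar{\gamma}_{\nu_o}^{\mu_o}\in T_{\nu_o}\M$, and since the tangent spaces are linear subspaces the combinations $\xi$ and $\eta$ are tangent. I expect the main obstacle to be bookkeeping: keeping each one-sided inequality aligned with the correct (sub- versus super-) differential, and verifying that each inequality holds for \emph{every} admissible optimal $\sigma$ so that the $\sup$ (respectively $\inf$) over $\Gamma_o$ may be taken on the appropriate side; the tangentiality step itself reduces to invoking the known behaviour of barycentric projections of optimal plans.
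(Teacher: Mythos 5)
Your proposal is correct and follows essentially the same route as the paper: both parts rest on the one-sided variational inequalities coming from the minimality of $\nu_o$ (respectively, testing the infimum defining $H_\tau$ against the competitor $\nu_o$), a gluing of the optimal plan $\gamma$ with an optimal plan to the perturbed measure along the common marginal, the same quadratic expansion, and the disintegration that produces the barycentric projections, with tangentiality handled by the standard fact about barycentric projections of optimal plans (the paper cites Proposition 4.3 of \cite{AG} for $Id-\bar{\gamma}_{\mu_o}^{\nu_o}\in T_{\mu_o}\M$ directly, whereas you split it into $Id\in T_{\mu_o}\M$ plus linearity, which amounts to the same thing).
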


\begin{proof}
From the definition of  $\nu_o \in J_{\tau}[\mu_o]$, we have
\begin{equation}\label{eq4:lemma:MYSP}
H_{\tau}(\mu) - H_{\tau}(\mu_o) \leq \frac{1}{2\tau}W_2^2(\mu,\nu_o) - \frac{1}{2\tau}W_2^2(\mu_o,\nu_o)\quad \forall \mu\in\mathcal{M},
\end{equation}

\noindent
For a fixed $\mu,$ we choose $\eta \in \Gamma_o(\mu_o,\mu).$ Let $\eta =\int_{\RD}\eta_{x}d\mu_o(x)$ and $ \gamma=\int_{\RD}\gamma_{x}d\mu_o(x)$ be the
disintegration of $\eta$ and $\gamma$ w.r.t $\mu_o.$ Define $\mathbf{u}_1\in \mathcal{P}(\R^{3D})$ to be such that the disintegration of $\mathbf{u}_1$
w.r.t $\mu_o$ is
\begin{equation}\label{eq3:lemma:MYSP}
\int_{\RD}\eta_{x}\times\gamma_{x}d\mu_o(x).
\end{equation}

\noindent
We combine (\ref{eq4:lemma:MYSP}) and
(\ref{eq3:lemma:MYSP}) to get
\begin{align}\label{eq5:lemma:MYSP}\nonumber
H_{\tau}(\mu) - H_{\tau}(\mu_o) &\leq \frac{1}{2\tau}W_2^2(\mu,\nu_o) - \frac{1}{2\tau}W_2^2(\mu_o,\nu_o)\\\nonumber
  &\leq \frac{1}{\tau}\int_{\R^{3D}} \frac{|y-z|^2}{2}-\frac{|x-z|^2}{2}d\mathbf{u}_1(x,y,z)\\\nonumber
  &= \frac{1}{\tau}\int_{\R^{3D}} \langle x-z,y-x \rangle + \frac{|y-x|^2}{2}d\mathbf{u}_1(x,y,z)\\
  & =\int_{\R^{2D}} \langle \frac{x- \bar{\gamma}_{\mu_o}^{\nu_o}(x)}{\tau} ,y-x \rangle
d\eta(x,y) + \frac{1}{2\tau}W_2^2(\mu_o,\mu)
\end{align}

\noindent
which gives
$$\frac{Id - \bar{\gamma}_{\mu_o}^{\nu_o}}{\tau}  \in \partial^+ H_\tau(\mu_o).$$
 Furthermore, it is well known that
$Id - \bar{\gamma}_{\mu_o}^{\nu_o} \in T_{\mu_o}\mathcal{M}$(Proposition 4.3 of \cite{AG}). This concludes the first inclusion of
(\ref{subdiff:superdiff:lemma:MYSP}).

To prove the second, we again exploit $\nu_o \in J_{\tau}[\mu_o]$ to get

$$\frac{1}{2\tau}W_2^2(\mu_o,\nu_o) + H(\nu_o)\leq \frac{1}{2\tau}W_2^2(\mu_o,\nu)+ H(\nu) \quad \forall \nu\in\mathcal{M}. $$

\noindent
For a fixed $\nu,$ let $\tilde{\eta} \in \Gamma_o(\nu_o,\nu)$ and define $\mathbf{u}_2 \in \mathcal{P}(\R^{3D})$ to be such that whose disintegration w.r.t
$\nu_o$ is
\begin{equation*}
\int_{\RD}\gamma_{x}\times\tilde{\eta}_{x}d\nu_o(x),
\end{equation*}

\noindent
where, $\tilde{\eta} =\int_{\RD}\tilde{\eta}_{x}d\nu_o(x)$ and $ \gamma=\int_{\RD}\gamma_{x}d\nu_o(x)$ are disintegrations of $\tilde{\eta}$ and
$\gamma$ w.r.t $\nu_o.$ Computations as in (\ref{eq5:lemma:MYSP}) give
\begin{align*}
H_{\tau}(\nu) - H_{\tau}(\nu_o) \geq \int_{\R^{2D}} \langle \frac{\bar{\gamma}_{\nu_o}^{\mu_o}(x)-x}{\tau} ,y-x \rangle
d\tilde{\eta}(x,y) + \frac{1}{2\tau}W_2^2(\nu_o,\nu).
\end{align*}
We conclude the second inclusion in (\ref{subdiff:superdiff:lemma:MYSP}) using the same argument as above.
\end{proof}


 \begin{lemma}\label{lemma:stability}
Let $H:\mathcal{M}\rightarrow (-\infty,\infty]$ be a proper functional and let $H_\tau$ be the Moreau-Yosida approximation of $H$. Given a sequence
of measures $\mu_n$ and $\nu_n$ be such that $\nu_n \in  J_{\tau_n} [\mu_n].$ Furthermore, suppose there is a constant $C$ satisfying
\begin{equation}\label{eq6:lemma:stability}
W_2(\mu_n,\nu_n)\leq C\tau_n,
\end{equation}
for all $n$. If  $\mu_n$ converges narrowly to $\mu$ as $\tau_n\rightarrow 0,$ then $\nu_n$ also converges narrowly to $\mu$. Furthermore, we have
\begin{equation}\label{eq2:lemma:stability}
 \bigcap_{m=1}^{\infty}\bar{co} \bigl (\{ \frac{Id-\bar{\gamma}_{\mu_{\tau_n}}^{\nu_{\tau_n}}}{\tau_n}\mu_{\tau_n}:n\geq m  \} \bigr )=
\bigcap_{m=1}^{\infty}\bar{co} \bigl (\{ \frac{\bar{\gamma}_{\nu_{\tau_n}}^{\mu_{\tau_n}}-Id}{\tau_n}\nu_{\tau_n}:n\geq m  \} \bigr ).
\end{equation}
 Here $\bar{co}$ denotes the closed convex hull with
respect to weak*-topology.
 \end{lemma}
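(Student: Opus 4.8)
The plan is to prove the two assertions separately: first the narrow convergence of $\nu_n$, then the identity of closed convex hulls, the latter by reducing to a soft functional-analytic principle once a single quantitative estimate is in hand.

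For the narrow convergence, I would exploit only the hypothesis $W_2(\mu_n,\nu_n)\le C\tau_n\to 0$. Using $W_1\le W_2$ and Kantorovich--Rubinstein duality, for every bounded Lipschitz $f$ one has $|\int f\,d\mu_n-\int f\,d\nu_n|\le \mathrm{Lip}(f)\,W_2(\mu_n,\nu_n)\to 0$. Combined with $\int f\,d\mu_n\to\int f\,d\mu$ (the assumed narrow convergence of $\mu_n$), this gives $\int f\,d\nu_n\to\int f\,d\mu$ for every bounded Lipschitz $f$. Since bounded Lipschitz functions form a convergence-determining class for probability measures, $\nu_n\to\mu$ narrowly. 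This part is essentially routine.

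The crux of the identity is to show that the two families of $\RD$-valued measures, say $E_n:=\tau_n^{-1}(Id-\bar\gamma_{\mu_n}^{\nu_n})\mu_n$ and $F_n:=\tau_n^{-1}(\bar\gamma_{\nu_n}^{\mu_n}-Id)\nu_n$, differ by a weak*-null sequence. Fixing $\gamma_n\in\Gamma_o(\mu_n,\nu_n)$ and testing against $\phi\in C_c^\infty(\RD;\RD)$, the defining property of the barycentric projections (Definition \ref{barycentricprojection}) collapses both pairings onto the single plan $\gamma_n$, yielding $\langle E_n,\phi\rangle=\tau_n^{-1}\int\langle x-y,\phi(x)\rangle\,d\gamma_n$ and $\langle F_n,\phi\rangle=\tau_n^{-1}\int\langle x-y,\phi(y)\rangle\,d\gamma_n$. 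Subtracting and applying $|\phi(x)-\phi(y)|\le\mathrm{Lip}(\phi)|x-y|$ gives $|\langle E_n-F_n,\phi\rangle|\le\tau_n^{-1}\mathrm{Lip}(\phi)\int|x-y|^2\,d\gamma_n=\tau_n^{-1}\mathrm{Lip}(\phi)\,W_2^2(\mu_n,\nu_n)\le\mathrm{Lip}(\phi)\,C^2\tau_n\to 0$, so $E_n-F_n\to 0$ weak*. The same Jensen/Cauchy--Schwarz bound shows $\|E_n\|,\|F_n\|\le C$ in total variation, so by Banach--Alaoglu and separability of $C_0$ both sequences lie in a fixed weak*-compact metrizable ball.

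To conclude, I would invoke the general fact that, for a sequence $(x_n)$ inside a weak*-compact metrizable set, $\bigcap_m\bar{co}\{x_n:n\ge m\}=\bar{co}(L)$, where $L$ is the set of weak* cluster points of $(x_n)$: the inclusion $\supseteq$ is immediate, and for $\subseteq$ one separates any point outside $\bar{co}(L)$ from $L$ by a weak*-continuous functional (Hahn--Banach) and uses compactness to see that $\limsup_n\langle x_n,\phi\rangle$ is attained along cluster points, hence an entire tail is separated. Applying this to both $(E_n)$ and $(F_n)$, it only remains to note that $E_n-F_n\to 0$ forces equal cluster sets $L$ (any subsequential weak* limit of $E_n$ is one of $F_n$, and conversely), so both intersections equal $\bar{co}(L)$, which is exactly (\ref{eq2:lemma:stability}). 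The Wasserstein estimate of the third paragraph is the computational heart but is mechanical; the genuinely delicate step is this last one, namely fixing the weak*/Banach--Alaoglu framework so the intersection of closed convex hulls of tails is characterized by cluster points and making the separation argument rigorous. I expect that abstract step, rather than the metric estimates, to demand the most care.
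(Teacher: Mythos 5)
Your proposal is correct and takes essentially the same route as the paper: the computational heart --- testing both vector-valued measures against a fixed $\psi\in C_c^\infty(\mathbb{R}^D;\mathbb{R}^D)$, collapsing both pairings onto a single optimal plan $\gamma_n$ via the defining property of the barycentric projections, and bounding the discrepancy by $\mathrm{Lip}(\psi)\,W_2^2(\mu_n,\nu_n)/\tau_n\le \mathrm{Lip}(\psi)\,C^2\tau_n\to 0$ --- is exactly the paper's computation (the paper uses a Taylor expansion of $\psi$ where you use its Lipschitz bound, which is the same estimate). Your closing functional-analytic step (uniform total-variation bounds via Jensen, Banach--Alaoglu, and the identification of $\bigcap_m\bar{co}$ of the tails with the closed convex hull of the weak* cluster set, so that the two intersections coincide because the difference of the sequences is weak*-null) is a correct and more explicit rendering of the passage the paper compresses into ``which concludes (3.8).''
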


 \begin{proof}
Narrow convergence of $\nu_n$ to $\mu$ is trivial from the assumption (\ref{eq6:lemma:stability}) and the narrow convergence of $\mu_n$ to $\mu$ as $\tau_n \rightarrow 0.$

To prove (\ref{eq2:lemma:stability}), let us fix $\psi \in C_c^{\infty}(\mathbb{R}^D;\RD).$ Then we have
\begin{align}\label{EQ-2}\nonumber
     \int_{\RD}\langle\psi(y) ,\frac{\bar{\gamma}_{\nu_{\tau_n}}^{\mu_{\tau_n}}-Id}{\tau_n}(y)\rangle d\nu_{\tau_n}(y) &= \int_{\mathbb{R}^{2D}}
     \langle\psi(y) , \frac{x-y}{\tau_n}\rangle d\gamma_{\tau_n}(x,y)\\ \nonumber
&= \int_{\mathbb{R}^{2D}}\langle \psi(x) + \nabla \psi(\xi_{x,y})\cdot(y-x) ,  \frac{x-y}{\tau_n}\rangle
d\gamma_{\tau_n}(x,y)\\\nonumber
& = \int_{\mathbb{R}^{D}}\langle \psi(x) ,  \frac{Id-\bar{\gamma}_{\mu_{\tau_n}}^{\nu_{\tau_n}}}{\tau_n}(x)\rangle d\mu_n(x)\\
& \qquad + \int_{\mathbb{R}^{2D}}\langle \nabla \psi(\xi_{x,y})\cdot(y-x) ,  \frac{x-y}{\tau_n}\rangle d\gamma_{\tau_n}(x,y)
 \end{align}
 where $\gamma_{\tau_n} \in \Gamma_o(\mu_{\tau_n},\nu_{\tau_n}) $ and $\xi_{x,y}$ is a point on the line segment connecting $x$ and $y.$
Since $\psi \in C_c^{\infty}(\mathbb{R}^D;\RD)$, we have
\begin{align}\label{EQ-3}\nonumber
\left |\int_{\mathbb{R}^{2D}}\langle \nabla \psi(\xi_{x,y})\cdot(y-x) ,  \frac{x-y}{{\tau_n}}\rangle
d\gamma_{{\tau_n}}(x,y) \right | &\leq \frac{||\nabla \psi ||_{\infty}}{\tau_n}W_2^2(\mu_{\tau_n},\nu_{\tau_n})\\
 &\leq ||\nabla \psi ||_{\infty}C^2{\tau_n}.
 \end{align}

\noindent
 We combine  equations (\ref{EQ-2}) and (\ref{EQ-3}) to get
\begin{equation*}
\lim_{n\rightarrow \infty}\int_{\RD}\langle\psi(y) ,\frac{\bar{\gamma}_{\nu_{\tau_n}}^{\mu_{\tau_n}}-Id}{\tau_n}(y)\rangle d\nu_{\tau_n}(y)
= \lim_{n\rightarrow \infty} \int_{\mathbb{R}^{D}}\langle\psi(x) ,  \frac{Id-\bar{\gamma}_{\mu_{\tau_n}}^{\nu_{\tau_n}}}{\tau_n}(x)\rangle d\mu_n(x),
\end{equation*}
which concludes  (\ref{eq2:lemma:stability}).
 \end{proof}

%
%

 \section{Convergence of Hamiltonian systems w.r.t Moreau-Yosida approximation}
Now we are ready to state our main result on the stability of Hamiltonian systems.
More specifically, solutions of the approximated Hamiltonians converge to a solution of the original
Hamiltonian system. Let us first be clear about the meaning of solution.

\begin{definition}
 Let $H : \M \rightarrow (-\infty,\infty]$ be a proper and lower semicontinuous function. We say
    that an $2-$absolutely continuous curve $\mu_t :[0,T]\rightarrow D(H)$ is a
    {\it solution of the Hamiltonian system} starting from
    $\bar{\mu}\in \M$, if there exist a vector field $v_t\in L^2(\mu_t)$ with $||v_t||_{\mu_t}\in L^1(0,T)$,
    such that
    \begin{align}\label{eq-1}
   \left \{
  \begin{array}{l}
 \frac{d}{dt}\mu_t + \nabla \cdot(\mathbb{J} v_t\mu_t) = 0,\quad \mu_0=\bar{\mu} \qquad t \in (0 , T) \\
 v_t \in \partial _-  H(\mu _t)\cap T_{\mu_t}\M  \quad a.e\quad t\in (0,T). \\
  \end{array}
  \right.
\end{align}
\end{definition}
\noindent
Equation (\ref{eq-1}) should be understood in the sense of a distribution:
For any $\eta\in C_c^{\infty}(0,T)$ and $\zeta\in C_c^{\infty}(\RD)$, we have
$$\int_0^T\int_{\RD}\eta'(t)\zeta(x) + \eta(t)\langle\nabla \zeta(x):\mathbb{J}v_t(x)\rangle d\mu_t(x)dt=0.$$


\bigskip
\medskip
To ensure the stability of Hamiltonian systems, we require two assumptions on the Hamiltonian.

\noindent
{\it {\bf(H1)} There exist constants $C_o\in [0,\infty), R_o\in(0,\infty]$ such that if $W_2(\mu,\bar{\mu}) < R_o$ then $\mu \in D(H)$ and
, for each $\mu$, there exists a unique $\nu \in J_\tau[\mu]$ satisfying
 \begin{itemize}
 \item
 $\mu\mapsto \nu\in J_\tau[\mu]$ is continuous w.r.t the topology induced by the Wasserstein distance and
 \begin{equation}\label{Condition:H1}
 \frac{W_2(\mu,\nu)}{\tau} \leq C_o.
\end{equation}

\item
 There exists a constant $k>0$ such that
 \begin{equation}\label{Condition-2:H1}
 If \quad supp(\mu) \subset B_0(r)\quad then \quad supp(\nu) \subset B_0(kr),
 \end{equation}
for all $r>0$ and $\mu.$ Recall, $B_0(r)$ is the ball around the origin with radius $r$ in $\RD.$
\end{itemize}}

\noindent
{\bf (H2)} If $\mu_n\in D(\partial_-H)$ and $\mu_n$ converges narrowly to $ \mu$, then $\mu\in D(\partial_-H)$ and we have
\begin{equation}\label{H2:AG}
\bigcap_{m=1}^\infty \bar{co}(\{ w_n\mu_n : w_n\in \partial_- H(\mu_n) \cap T_{\mu_n} \mathcal{M}, n\geq m \}) \subset \{w\mu :
w\in \partial_- H(\mu) \cap T_\mu \mathcal{M} \}.
 \end{equation}

\begin{remark}\label{Remark:H1}
\begin{enumerate}
   \item
   Notice that our Hamiltonian $H$ does not need to be subdifferentiable everywhere in a neighborhood of $\bar{\mu}.$
   We only assume that $D(\partial_-H)$ is closed in the weak* topology and (\ref{H2:AG}) holds.
  \item
   Concerning (H1), suppose $H$ satisfies the following convexity condition for some $\lambda\in \R$:

\noindent
For all $\mu,\nu_0$ and $\nu_1$ in $D(H),$ there exists a curve $\sigma:[0,1]\rightarrow \M$ such that
$\sigma_0=\nu_0,\sigma_1=\nu_1$ and
\begin{equation}\label{eq1:Remark:H1}
\mathcal{H}(\tau,\mu;\sigma_t) \leq (1-t)\mathcal{H}(\tau,\mu;\nu_0) + t\mathcal{H}(\tau,\mu;\nu_1) -\frac{1+\lambda\tau}{2\tau}t(1-t)W_2^2(\nu_o,\nu_1),
\end{equation}
for all $t\in[0,1]$ and $0<\tau<\frac{1}{\lambda^-}$. Here, $\mathcal{H}(\tau,\mu;\nu):=\frac{1}{2\tau}W_2^2(\nu,\mu)+H(\nu).$

Then, Theorem 4.1.2 in \cite{ags:book} says that
if $\mu\in D(H)$ and $\lambda\tau >-1$ then there exists a unique $\mu_\tau\in J_\tau[\mu]$ and the
map $\mu\in D(H)\mapsto \mu_\tau \in J_\tau[\mu]$ is continuous.
 \end{enumerate}
%
%
%
%
\end{remark}

\subsection {Existence of solutions of the regularized Hamiltonian systems}

\begin{lemma}\label{MY-H2}
Let $H:\mathcal{M}\rightarrow (-\infty,\infty]$ be proper and lower semicontinuous, and satisfy (H1).
Let $\mu_n$ be a sequence satisfying $W_2(\mu_n,\bar{\mu})< R_o$ and $\nu_n \in \it{J}_{\tau}[\mu_n].$ If $\mu_n$ converges to $\mu$ in the Wasserstein
topology, then $\nu_n$ also converges to $ \nu\in J_\tau[\mu]$ in the same topology. Furthermore, we have
\begin{equation}\label{eq2:lemma:MY-H2}
\mathcal{K}_o:=\bigcap _{m=1}^{\infty} \overline{co} \bigl ( \bigl \{ \frac{Id-
\bar{\gamma}_{\mu_n}^{\nu_n}}{\tau} \mu_n: n \geq m\bigr \} \bigr ) \subset \bigl \{ \frac{Id-\bar{\gamma}_{\mu}^{\nu}}{\tau} \mu \bigr \},
\end{equation}

\noindent
where $\gamma_n \in \Gamma_o(\mu_n,\nu_n)$ and $\gamma \in \Gamma_o(\mu,\nu).$

\end{lemma}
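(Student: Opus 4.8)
The plan is to prove the three assertions of Lemma \ref{MY-H2} in sequence, using (H1) to pin down the measures $\nu_n$ and then invoking Lemma \ref{lemma:stability} to identify the two families of weak* limits. For the \textbf{narrow convergence of $\nu_n$}, I would first note that the uniqueness clause of (H1) makes $\mu \mapsto \nu \in J_\tau[\mu]$ a well-defined continuous map on $\{W_2(\cdot,\bar\mu)<R_o\}$; hence Wasserstein convergence $\mu_n \to \mu$ forces $\nu_n = \nu(\mu_n) \to \nu(\mu) =: \nu$ in the Wasserstein topology, which in particular gives narrow convergence. (I should check that the limit $\mu$ still lies in the open ball so that $\nu(\mu)$ is defined; this holds because $W_2(\mu,\bar\mu) \le \liminf W_2(\mu_n,\bar\mu) \le R_o$, with a little care if equality can occur — I would assume the hypothesis is meant to keep us strictly inside, or note that $\mu \in D(H)$ suffices by lower semicontinuity.)

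For the \textbf{key inclusion} (\ref{eq2:lemma:MY-H2}), the strategy is to apply Lemma \ref{lemma:stability} with the constant $C := C_o$ from (\ref{Condition:H1}), which is exactly the bound $W_2(\mu_n,\nu_n) \le C_o \tau_n$ required as hypothesis (\ref{eq6:lemma:stability}); note here the regularization parameter is held \emph{fixed} at $\tau$, so $\tau_n \equiv \tau$, but the lemma's proof only used the quadratic estimate (\ref{EQ-3}) and not $\tau_n \to 0$, so it still yields the equality (\ref{eq2:lemma:stability}) identifying the two intersections of closed convex hulls. The point is then to evaluate the right-hand family of (\ref{eq2:lemma:stability}). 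By Lemma \ref{MYSP} applied at each $\mu_n$ with $\nu_n \in J_\tau[\mu_n]$, we have $\frac{\bar\gamma_{\nu_n}^{\mu_n}-Id}{\tau} \in \partial_- H(\nu_n) \cap T_{\nu_n}\M$, so each $\nu_n \in D(\partial_-H)$; since $\nu_n \to \nu$ narrowly, assumption (H2) applies and gives
\begin{equation*}
\bigcap_{m=1}^\infty \overline{co}\bigl(\{\tfrac{\bar\gamma_{\nu_n}^{\mu_n}-Id}{\tau}\nu_n : n\ge m\}\bigr) \subset \{w\nu : w\in\partial_-H(\nu)\cap T_\nu\M\}.
\end{equation*}
Combining this with the equality (\ref{eq2:lemma:stability}) shows $\KK_o$ is contained in the set on the right. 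It then remains to identify each such $w\nu$ as $\frac{Id-\bar\gamma_\mu^\nu}{\tau}\mu$: again by Lemma \ref{MYSP} at $\mu$, the vector $\frac{\bar\gamma_\nu^\mu-Id}{\tau}$ is the element of $\partial_-H(\nu)\cap T_\nu\M$ produced by $\nu\in J_\tau[\mu]$, and the computation in Lemma \ref{lemma:stability} (reading equation (\ref{EQ-2}) in the limit) shows that the associated measure-valued object $\frac{\bar\gamma_\nu^\mu-Id}{\tau}\nu$ coincides weak* with $\frac{Id-\bar\gamma_\mu^\nu}{\tau}\mu$. This yields (\ref{eq2:lemma:MY-H2}).

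\textbf{The main obstacle} I anticipate is the bookkeeping in the last identification step: matching the two barycentric-projection vector fields across the change of base measure ($\mu$ versus $\nu$) and verifying that the single limit point in the intersection is exactly $\frac{Id-\bar\gamma_\mu^\nu}{\tau}\mu$ rather than merely some element of $\partial_-H(\nu)\cdot\nu$. The clean way around this is to observe that uniqueness of $\nu\in J_\tau[\mu]$ together with uniqueness of the optimal plan $\gamma\in\Gamma_o(\mu,\nu)$ (or at least the well-definedness of the barycentric projections) forces the weak* limit to be the single measure $\frac{Id-\bar\gamma_\mu^\nu}{\tau}\mu$; one then checks, via the integration-by-parts identity in (\ref{EQ-2}) specialized to the limiting pair $(\mu,\nu)$, that testing against any $\psi\in C_c^\infty(\RD;\RD)$ gives the same value for $\frac{\bar\gamma_\nu^\mu-Id}{\tau}\nu$ and $\frac{Id-\bar\gamma_\mu^\nu}{\tau}\mu$. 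A secondary technical point worth stating explicitly is that (H2) requires the limiting measure to lie in $D(\partial_-H)$, which here is automatic since Lemma \ref{MYSP} shows $\nu\in D(\partial_-H)$ directly from $\nu\in J_\tau[\mu]$.
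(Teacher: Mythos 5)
There is a genuine gap in the second half of your argument. You propose to apply Lemma \ref{lemma:stability} with the regularization parameter held fixed, $\tau_n\equiv\tau$, claiming that its proof ``only used the quadratic estimate (\ref{EQ-3}) and not $\tau_n\to0$.'' But the estimate (\ref{EQ-3}) bounds the Taylor remainder by $\frac{\|\nabla\psi\|_\infty}{\tau_n}W_2^2(\mu_n,\nu_n)$, and this vanishes precisely because $W_2(\mu_n,\nu_n)\le C\tau_n\to0$. In the setting of Lemma \ref{MY-H2} the parameter $\tau$ is fixed, $\nu_n\to\nu\in J_\tau[\mu]$ with $\nu\neq\mu$ in general, so $W_2^2(\mu_n,\nu_n)/\tau\to W_2^2(\mu,\nu)/\tau$, which need not be zero. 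Hence the equality (\ref{eq2:lemma:stability}) of the two intersections of convex hulls fails here, and the downstream identification of $\frac{\bar\gamma_\nu^\mu-Id}{\tau}\nu$ with $\frac{Id-\bar\gamma_\mu^\nu}{\tau}\mu$ is simply false: these are vector-valued measures with different base measures ($\nu$ versus $\mu$) and the identity (\ref{EQ-2}) shows they differ by exactly that non-vanishing remainder. A further problem is that you invoke (H2), which is not a hypothesis of this lemma --- the statement assumes only (H1).

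The paper's proof avoids all of this by working directly with the $\mu_n$-based family. It takes $\mathbf{u}\in\mathcal{K}_o$, writes it as a weak* limit of convex combinations $\Lambda_m=\sum_{i=m}^{l_m}\lambda_i^m\frac{Id-\bar\gamma_{\mu_i}^{\nu_i}}{\tau}\mu_i$, rewrites each pairing $\int\langle F(x),\frac{x-\bar\gamma_{\mu_i}^{\nu_i}(x)}{\tau}\rangle d\mu_i$ as $\int\langle F(x),\frac{x-y}{\tau}\rangle d\gamma_i(x,y)$ using the definition of the barycentric projection, and then uses stability of optimal plans ($W_2(\gamma_i,\gamma)\to0$ for some $\gamma\in\Gamma_o(\mu,\nu)$, which follows from $\mu_i\to\mu$ and $\nu_i\to\nu$ in Wasserstein) to pass to the limit and conclude $\int F\cdot d\mathbf{u}=\int\langle F(x),\frac{x-y}{\tau}\rangle d\gamma(x,y)=\int\langle F(x),\frac{x-\bar\gamma_\mu^\nu(x)}{\tau}\rangle d\mu(x)$. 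Your first part (using the continuity clause of (H1) to get $\nu_n\to\nu$ in Wasserstein) matches the paper, but the remainder of your route does not go through; the correct mechanism is the convergence of the optimal plans themselves, not a transfer to the $\nu_n$-side via Lemma \ref{lemma:stability}.
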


\begin{proof}
By (H1), there exists a $\nu\in J_\tau[\mu]$ such that $W_2(\nu_n,\nu)\rightarrow 0.$

Next, to prove (\ref{eq2:lemma:MY-H2}), let us assume ${\bf {u}} \in \mathcal{K}_o.$ Then, there exists a sequence $\{\Lambda_m  \}_{m=1}^\infty$
such that
\begin{equation*}
\Lambda_m  =  \sum_{i=m}^{l_m}\lambda_i^m\frac{Id-\bar{\gamma}_{\mu_i}^{\nu_i}}{\tau}\mu_i, \quad \sum_{i=m}^{l_m}\lambda_i^m =1
, \quad 0\leq \lambda_i^m\leq 1,\quad m\leq l_m\in \mathbb{N}
\end{equation*}
and $\Lambda_m $ weak* converges to $\bf{u}$. For any $F\in C_c(\RD;\RD)$,  we have
\begin{align}\label{eq3:lemma:MY-H2}\nonumber
 \int_{\RD} F\cdot d{\bf {u}}
 &= \lim_{m\rightarrow \infty} \sum_{i=m}^{l_m} \lambda_i^m \int_{\RD} \langle F(x), \frac{x-\bar{\gamma}_{\mu_i}^{\nu_i}(x)}{\tau}\rangle d\mu_i(x)\\
&= \lim_{m\rightarrow \infty} \sum_{i=m}^{l_m} \lambda_i^m \int_{\RD} \langle F(x), \frac{x-y}{\tau}\rangle d\gamma_i(x,y),
\end{align}
where $\gamma_i \in \Gamma_o(\mu_i,\nu_i).$ Since $W_2(\mu_n,\nu),W_2(\nu_n,\nu)\rightarrow 0$ as $n\rightarrow \infty,$ there exists
$\gamma\in \Gamma_0(\mu,\nu)$ so that
\begin{equation}\label{eq4:lemma:MY-H2}
\lim_{i\rightarrow \infty}W_2(\gamma_i,\gamma)=0.
\end{equation}
We combine (\ref{eq3:lemma:MY-H2}) and (\ref{eq4:lemma:MY-H2}), to get
\begin{align}\label{eq1:lemma:MY-H2}
 \int_{\RD} F\cdot d{\bf {u}}
 =   \int_{\mathbb{R}^{2D}} \langle F(x), \frac{x-y}{\tau}\rangle d{\gamma}(x,y),
\end{align}
which proves (\ref{eq2:lemma:MY-H2}).
\end{proof}

Now we generate a solution of the Hamiltonian system for $H_\tau.$ The proof of the following theorem is based on Theorem 7.4 in \cite{AG}.
\begin{theorem}\label{thm-super}
Let $H :
\M \rightarrow (-\infty, \infty]$ be a proper and lower semicontinuous functional satisfying the assumption (H1). Let $C_o$ and $R_o$ be constants in
(H1), and set $T=\frac{R_o}{C_o}.$

If $\bar{\mu} \in \M$ has bounded support, then for each $\tau>0,$ there exists a solution of the following Hamiltonian system starting from $\bar{\mu}$
\begin{equation}\label{eq-5}
   \left \{
  \begin{array}{l}
  \frac{d}{dt}\mu_t^{\tau} + \nabla \cdot(\mathbb{J} v_t^{\tau}\mu_t^{\tau}) = 0, \quad \mu_0^\tau=\bar{\mu}\quad t \in (0 , T) \\
  \\
  v_t^{\tau}= \frac{Id-\bar{\gamma}_{\mu_t^\tau}^{\nu_t^\tau}}{\tau} \in \partial^{+}  H_{\tau}(\mu _t^{\tau})\cap T_{\mu_t^{\tau}}M,
  \qquad a.e \quad t\in(0,T),\\
  \end{array}
  \right.
\end{equation}
where $\nu_t^\tau \in J_\tau[\mu_t^\tau].$ Furthermore, $t\mapsto \mu_t^\tau$ is Lipschitz continuous w.r.t the Wasserstein distance with Lipschitz
constant $C_o$ which is
independent of $\tau.$
\label{thm:flow1}
\end{theorem}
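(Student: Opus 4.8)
The plan is to adapt the time-discretized Euler scheme underlying Theorem~7.4 of \cite{AG}, exploiting the $1/\tau$-concavity of $H_\tau$ (Lemma~\ref{lemma:concave}), the explicit superdifferential formula of Lemma~\ref{MYSP}, and the stability statement of Lemma~\ref{MY-H2} to carry out the limit passage. First I would fix $\tau$, partition $[0,T]$ into $N$ steps of length $h=T/N$, and build a discrete curve recursively: set $\mu_0^{(N)}=\bar\mu$; given $\mu_{kh}^{(N)}$ with $W_2(\mu_{kh}^{(N)},\bar\mu)<R_o$, use (H1) to select the unique $\nu_{kh}^{(N)}\in J_\tau[\mu_{kh}^{(N)}]$, form the velocity $v_{kh}^{(N)}=\frac{Id-\bar\gamma_{\mu_{kh}^{(N)}}^{\nu_{kh}^{(N)}}}{\tau}$, which by Lemma~\ref{MYSP} lies in $\partial^+H_\tau(\mu_{kh}^{(N)})\cap T_{\mu_{kh}^{(N)}}\M$, and advance one step by $\mu_{(k+1)h}^{(N)}=(Id+h\mathbb{J}v_{kh}^{(N)})_{\#}\mu_{kh}^{(N)}$. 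I then extend $\mu^{(N)}$ to a curve on $[0,T]$ by interpolation and let $v_t^{(N)}$ be the piecewise-constant velocity equal to $v_{kh}^{(N)}$ on $[kh,(k+1)h)$.

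The first block of estimates I would establish are the two uniform bounds that make this scheme well-posed on all of $[0,T]$. Since the barycentric projection contracts the $L^2$-norm, Jensen's inequality together with (\ref{Condition:H1}) gives $\|v_{kh}^{(N)}\|_{\mu_{kh}^{(N)}}\le W_2(\mu_{kh}^{(N)},\nu_{kh}^{(N)})/\tau\le C_o$; because $\mathbb{J}$ preserves the Euclidean norm (as does the canonical symplectic matrix), this yields $W_2(\mu_{kh}^{(N)},\mu_{(k+1)h}^{(N)})\le hC_o$, so the discrete curves are equi-Lipschitz with constant $C_o$ independent of both $N$ and $\tau$. Consequently $W_2(\mu_t^{(N)},\bar\mu)\le C_o t<C_oT=R_o$ for $t<T$, which is exactly the condition keeping each iterate in the region where (H1) applies; this is precisely where the choice $T=R_o/C_o$ enters. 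Separately, the support condition (\ref{Condition-2:H1}) lets me propagate a support bound: if $supp(\mu_{kh}^{(N)})\subset B_0(r)$ then $\bar\gamma_{\mu_{kh}^{(N)}}^{\nu_{kh}^{(N)}}$ takes values in $B_0(kr)$, giving a pointwise bound $|v_{kh}^{(N)}|\le (1+k)r/\tau$ on the support, and a Gronwall iteration then confines all supports to a fixed ball $B_0(\rho)$ on $[0,T]$ (with $\rho$ allowed to depend on $\tau$).

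With these bounds in hand, the measures $\mu_t^{(N)}$ are supported in $\overline{B_0(\rho)}$, hence tight, and the curves are equi-$C_o$-Lipschitz in $W_2$; by a Wasserstein Arzel\`a--Ascoli argument I would extract a subsequence converging uniformly on $[0,T]$ to a limit curve $\mu^\tau\in AC_2([0,T];\M)$ that is $C_o$-Lipschitz, so $W_2(\mu_t^\tau,\bar\mu)\le C_o t<R_o$ and $\mu_t^\tau$ inherits the fixed support bound. Simultaneously I would pass to the weak* limit of the momenta $E_t^{(N)}:=\mathbb{J}v_t^{(N)}\mu_t^{(N)}$, which are bounded in total variation by $\|v_t^{(N)}\|_{\mu_t^{(N)}}\le C_o$; the uniform $L^2$-bound forces the limit $E_t$ to be absolutely continuous with respect to $\mu_t^\tau$, so I may write $E_t=\mathbb{J}v_t^\tau\mu_t^\tau$ for some $v_t^\tau\in L^2(\mu_t^\tau)$, and the pair $(\mu^\tau,v^\tau)$ solves the continuity equation in (\ref{eq-5}) distributionally. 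The Lipschitz constant is then recorded via $|(\mu^\tau)'|(t)\le\|\mathbb{J}v_t^\tau\|_{\mu_t^\tau}=\|v_t^\tau\|_{\mu_t^\tau}\le C_o$.

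The main obstacle is the final identification of the limit velocity as the correct superdifferential element, namely $v_t^\tau=\frac{Id-\bar\gamma_{\mu_t^\tau}^{\nu_t^\tau}}{\tau}$ with $\nu_t^\tau\in J_\tau[\mu_t^\tau]$. Weak* convergence only places the limit velocity-momentum in a closed convex hull of the discrete ones, and this is exactly the object controlled by Lemma~\ref{MY-H2}: the continuity of $\mu\mapsto\nu\in J_\tau[\mu]$ from (H1) gives $\nu_t^{(N)}\to\nu_t^\tau\in J_\tau[\mu_t^\tau]$, while the inclusion (\ref{eq2:lemma:MY-H2}) collapses the convex hull $\mathcal{K}_o$ onto the singleton $\{\frac{Id-\bar\gamma_{\mu_t^\tau}^{\nu_t^\tau}}{\tau}\mu_t^\tau\}$, pinning down $v_t^\tau$ for a.e.\ $t$. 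Lemma~\ref{MYSP} then certifies $v_t^\tau\in\partial^+H_\tau(\mu_t^\tau)\cap T_{\mu_t^\tau}\M$, so $(\mu^\tau,v^\tau)$ is the desired solution. The delicate points I expect to fight with are the measurability in $t$ of the selection $\nu_t^\tau$ and the interchange of the time-discrete convex-hull argument with the a.e.-in-$t$ conclusion, both of which mirror the corresponding steps in \cite{AG}.
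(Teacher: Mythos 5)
Your proposal follows essentially the same route as the paper: the same Euler-type discretization with velocity $\mathbb{J}\frac{Id-\bar{\gamma}_{\mu_{kh}}^{\nu_{kh}}}{\tau}$ on each subinterval, the same uniform Lipschitz bound via Jensen and (\ref{Condition:H1}), the same support propagation via (\ref{Condition-2:H1}), and the same limit passage $N\to\infty$ in which the convex hull of discrete momenta is collapsed by Lemma \ref{MY-H2} and the superdifferential membership is supplied by Lemma \ref{MYSP}. The argument is correct (your explicit use of $|\mathbb{J}v|=|v|$ is the same implicit assumption the paper makes in its norm estimate), so no further comparison is needed.
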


\begin{proof}

{\bf Step 1. Construction of a discrete solution}

For given $\tau>0,$ we fix an integer $N$ and divide $[0,T]$ in $N$ equal subintervals of length $h=T/N.$  We build discrete solutions
$\mu_{t, \tau}^N$ satisfying:
\\
(a) The Lipschitz constant  of $t\mapsto \mu_{t, \tau}^N\in \M$ is less than $C_o.$
\\
(b) For all $t\in[0,T],$ we have $supp(\mu_{t, \tau}^N) \subset B_0(e^{\frac{(1+k)T}{\tau}}r)$ if $supp(\bar{\mu}) \subset B_0(r).$ Here, $k>0$ is same
as in (H1).
\\
(c) $\mu_{t, \tau}^N$ satisfies the discrete Hamiltonian equation
 \begin{align}
\frac{d}{dt}\mu_{t, \tau}^N + \nabla \cdot(w_{t, \tau}^N\mu_{t, \tau}^N) = 0, \qquad t \in (0 , T),
 \end{align}
with
\begin{equation}
w_{t, \tau}^N=\mathbb{J}\frac{Id - \bar{\gamma}_{\mu_{t, \tau}^N}^{\nu_{t, \tau}^N}}{\tau} \quad  for \quad t=0,h,2h,\cdots,Nh,
\end{equation}
where $ \nu_{t, \tau}^N \in J_\tau[\mu_{t, \tau}^N ]$ and $\gamma \in \Gamma_o(\mu_{t, \tau}^N,\nu_{t, \tau}^N).$

 Since $N$ and $\tau$ are fixed,  we use the notation $\mu_t:=\mu_{t,\tau}^N$ for convenience.

\noindent
(i) We build the solution in $[0,h].$

 Let us call $\mu_0:=\bar{\mu}$ and choose $\nu_0\in J_\tau[\mu_0]$ by (H1). We fix $\gamma \in \Gamma_o(\mu_0,\nu_0)$ and set

\begin{equation}\label{eq1:w0:theorem:existence}
w_0:= \mathbb{J}\frac{Id- \bar{\gamma}_{\mu_0}^{\nu_0}} {\tau}.
\end{equation}
We define

$$ \mu_t := (Id + tw_0)_{\#}
\mu_0, \qquad w_t :=
 \frac{(Id+tw_0)_{\#}(w_0\mu_0)}{\mu_t}, \qquad t\in [0,h]. $$
\\
We claim that $w_t$ is a velocity field for $\mu_t,$ that is,
\begin{equation}
\frac{d}{dt}\mu_t + \nabla \cdot (w_t\mu_t)=0,
\end{equation}
holds in the distribution sense in $(0,h)$. Indeed, for any
 $\phi \in C_c^{\infty}(\RD)$, we have

\begin{align}\label{eq1:discrete}\nonumber
\frac{d}{dt} \int_{\RD} \phi d\mu_t &= \frac{d}{dt} \int_{\RD}
\phi(Id+ tw_0) d\mu_0 \\
&= \int_{\RD} \langle\nabla \phi(x+tw_0(x)), w_0(x)\rangle d\mu_0(x) = \int_{\RD} \langle\nabla \phi, w_t\rangle d\mu_t.
\end{align}
Notice that Lemma 7.1 in \cite{AG} gives
\begin{equation}\label{eq1:theorem:existence}
\int_{\RD} |w_t|^2 d\mu_t \leq \int_{\RD} |w_0|^2 d\mu_0,
\end{equation}
\noindent
for all $t\in[0,h].$ Jensen's inequality with (H1) gives

\begin{equation}\label{HS1}
\int_{\RD} |w_0|^2 d\mu_0 = \int_{\RD}\bigl | \frac{Id-\bar{\gamma}_{\mu_0}^{\nu_0}}{\tau} \bigr |^2
d\mu_0 \leq \frac{1}{\tau^2}W_2^2(\mu_0 , \nu_0) \leq C_o^2.
\end{equation}

We exploit Proposition \ref{prac2curves} with (\ref{eq1:theorem:existence}) and (\ref{HS1}), to conclude that $t\mapsto \mu_t$ is Lipschitz
continuous with a Lipschitz constant $C_o.$

Next we show the bound on the support. Since $supp(\mu_0)\subset B_0(r),$ we have $supp(\nu_0)\subset B_0(kr)$ by (H1). Hence, if
$z\in supp(\mu_t)$ then
$$|z|\leq   \sup_{x\in supp(\mu_0)}\bigl |x + t\mathbb{J}\frac{x-\bar{\gamma}_{\mu_0}^{\nu_0}(x)}{\tau}  \bigr | \leq \bigl (r+ h\frac{r+kr}
{\tau} \bigr ),$$
hence we have $supp(\mu_t) \subset B_0 \bigl ((1+\frac{1+k}{\tau}h)r \bigr ).$
\\
(ii) We continue this process in $[h,2h].$

Since we have $W_2(\mu_0,\mu_{h})\leq hC_o\leq R_o,$ we can choose
$\nu_{h}\in J_\tau[\mu_{h}]$ and set

\begin{equation}\label{eq:w-h:theorem:existence}
 w_{h} := \mathbb{J} \frac{Id-\bar{\gamma}_{\mu_{h}}^{\nu_{h}}}{\tau}.
\end{equation}
 We introduce the following extension for $t \in (h,2h],$

\begin{equation*}
\mu_t = \bigl (Id + (t-h) w_h\bigr )_{\#} (\mu_{h}),
\qquad w_t = \frac{\bigl (Id + (t-h) w_{h} \bigr )_{\#}
(w_{h} \mu_{h})}{\mu_t}.
\end{equation*}

As in (i), we can check $t\mapsto \mu_t$ is Lipschitz continuous with a Lipschitz constant $C_o$ in $[h,2h].$ Furthermore,
the equation (\ref{eq1:discrete}) holds and we have $supp(\mu_t) \subset B_0((1+\frac{1+k}{\tau}h)^2r)$ for all $t\in [h,2h].$
\\
(iii)  We iterate the above process until we get a Lipschitz curve $t\mapsto \mu_t\in \mathcal{M}$ with Lipschitz constant $C_o.$
The curve satisfies (\ref{eq1:discrete}) for a.e $t\in(0,T)$ and hence
$$\frac{d}{dt}\mu_t + \nabla \cdot(w_t\mu_t)=0,$$
 holds in the distribution sense.
Furthermore,  for all $t\in [0,T]$
$$supp(\mu_t) \subset B_0\bigl ((1+\frac{1+k}{\tau} \frac{T}{N})^Nr\bigr ) \subset B_0(e^{\frac{(1+k)T}{\tau}}r).$$
Recalling that  $\mu_{t,\tau}^N:=\mu_t$ completes the proof.\\
\\
{\bf Step 2. Let $N$ increase to $\infty$}

From (a), $t\mapsto \mu_{t, \tau}^N$ are equi-bounded in
$\M$ and equi-Lipschitz with Lipschitz constant $C_o.$ 
Since $\mu_{t.\tau}^N $ have uniformly bounded supports, we may assume(up to a subsequence) that $\mu_{t,\tau}^N$ converges in the Wasserstein
topology as $N\rightarrow \infty.$ That is, there exist $\mu_t^\tau$ such that
$W_2(\mu_{t,\tau}^N,\mu_t^{\tau})\rightarrow 0$ for any $t\in [0,T].$
Moreover, $t\mapsto \mu_t^\tau$ is Lipschitz continuous with Lipschitz constant $C_o.$ As shown in \cite{AG},
$\mu_t^{\tau}$ solves $\frac{d}{dt}\mu_t^\tau+\nabla \cdot (w_t^\tau\mu_t^\tau)=0$  with the following property

$$w_t^{\tau} \mu_t^{\tau} \in \bigcap_{M=1}^{\infty} \bar{co}\{  w_{t,\tau}^N
\mu_{t ,\tau}^N : N\geq M \} \quad a.e \quad t\in(0,T). $$

Since
\begin{align*}
 w_{t,\tau}^N \mu_{t,\tau}^N &= \bigl(Id + (t -[Nt]/N)w_{[Nt]/N ,\tau}^N\bigr)_{\#} (w_{[Nt]/N ,\tau}^N
 \mu_{[Nt]/N ,\tau}^N) \\
&= \bigl(Id + (t -[Nt]/N)\mathbb{J} \frac{Id-\bar{\gamma}_{\mu_{[Nt]/N
,\tau}^N}^{{\nu}_{[Nt]/N ,\tau}^N}}{\tau}\bigr)_{\#} \bigl(\mathbb{J}
\frac{Id-\bar{\gamma}_{\mu_{[Nt]/N ,\tau}^N}^{{\nu}_{[Nt]/N
,\tau}^N}}{\tau} \mu_{[Nt]N,\tau}^N \bigr).
  \end{align*}
\noindent
We also obtain
\begin{equation}\label{eq2:theorem:existence}
w_t^{\tau} \mu_t^{\tau} \in \bigcap_{M=1}^{\infty} \bar{co}\bigl\{\mathbb{J} \frac{Id-
\bar{\gamma}_{\mu_{[Nt]/N ,\tau}^N}^{{\nu}_{[Nt]/N
,\tau}^N}}{\tau} \mu_{[Nt]N,\tau}^N : N \geq M \bigr\}.
\end{equation}

  Lemma \ref{MY-H2} together with (\ref{eq2:theorem:existence}) gives
 $$w_t^{\tau} \mu_t^{\tau} = \mathbb{J}v_t^{\tau}
\mu_t^{\tau},  \qquad v_t^{\tau}=\frac{Id-
\bar{\gamma}_{\mu_t^{\tau}}^{\nu_t^{\tau}}}{\tau}, $$
where $\nu_t^{\tau}\in \it{J}_\tau[\mu_t^{\tau}]$ for a.e $t\in(0,T).$  This with Lemma \ref{MYSP} concludes the proof.
\end{proof}

\subsection{Stability of Hamiltonian flows}

\begin{theorem}
Let $H : \M \rightarrow (-\infty, \infty]$ be a proper and lower semi-continuous functional satisfying (H1) and (H2). We assume that
$\bar{\mu}\in\M$ has a bounded support. For each $\tau\in (0,1),$ let $\mu^\tau$ be the solution of
the system (\ref{eq-5}) in Theorem \ref{thm:flow1}.  Then, $\{\mu^\tau\}_{\tau>0}$ (up to a sequence) converges to a solution of the Hamiltonian system
\begin{equation}\label{HF2}
   \left \{
  \begin{array}{l}
 \frac{d}{dt}\mu_t + \nabla \cdot(\mathbb{J} v_t\mu_t) = 0, \quad \mu_0 = \bar{\mu}\quad t \in (0 , T) \\
 v_t \in \partial_{-}  H(\mu _t)\cap T_{\mu_t}\M \quad a.e\quad t\in (0,T)  , \\
  \end{array}
  \right.
\end{equation}
 as $\tau$ converges to $0.$
\label{thm-hf}
\end{theorem}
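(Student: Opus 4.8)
The plan is to run the same compactness-and-identification scheme used in the proof of Theorem \ref{thm-super}, but now letting the regularization parameter $\tau\to0$ in place of the discretization parameter $N\to\infty$, and to pin down the limiting velocity by feeding the output of Lemma \ref{lemma:stability} into assumption (H2). First I would set up compactness. By Theorem \ref{thm-super} each curve $\mu^\tau$ is Lipschitz in $W_2$ with constant $C_o$ \emph{independent} of $\tau$, so $W_2(\mu_t^\tau,\bar\mu)\le C_o t\le R_o$ for $t\in[0,T]$. Since $\bar\mu$ has bounded support, this forces the second moments of $\{\mu_t^\tau\}_{t,\tau}$ to be uniformly bounded, hence the family is tight and narrowly relatively compact, and the curves are equi-Lipschitz for the narrow metric on this bounded-moment set. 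A refined Arzel\`a--Ascoli argument (Proposition 3.3.1 in \cite{ags:book}) then yields a sequence $\tau_n\downarrow0$ and a curve $\mu$ with $\mu_t^{\tau_n}\to\mu_t$ narrowly for every $t\in[0,T]$; narrow lower semicontinuity of $W_2$ makes $t\mapsto\mu_t$ Lipschitz with constant $C_o$, so $\mu\in AC_2(0,T;\M)$, and by (H1) we get $\mu_0=\bar\mu$ and $\mu_t\in D(H)$ for $t<T$.

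Next I would take the limit of the momenta. Writing $v_t^{\tau_n}=\frac{Id-\bar{\gamma}_{\mu_t^{\tau_n}}^{\nu_t^{\tau_n}}}{\tau_n}$ with $\nu_t^{\tau_n}\in J_{\tau_n}[\mu_t^{\tau_n}]$, the Jensen estimate of (\ref{HS1}) together with (\ref{Condition:H1}) gives $\|v_t^{\tau_n}\|_{\mu_t^{\tau_n}}\le C_o$, so the $\RD$-valued space-time measures $\mathcal{M}^{\tau_n}:=v_t^{\tau_n}\mu_t^{\tau_n}\,dt$ have total variation $\le C_oT$. I would extract a further subsequence with $\mathcal{M}^{\tau_n}\rightharpoonup^*\mathcal{N}$ on $(0,T)\times\RD$ and disintegrate $\mathcal{N}=\mathcal{N}_t\,dt$. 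Passing to the limit in the distributional form of (\ref{eq-5}) is then routine: the term $\int_0^T\eta'(t)\int\zeta\,d\mu_t^{\tau_n}dt$ converges by narrow convergence of $\mu_t^{\tau_n}$ and dominated convergence in $t$, while the term $\int_0^T\eta(t)\int\langle\nabla\zeta,\mathbb{J}v_t^{\tau_n}\rangle d\mu_t^{\tau_n}dt$ converges because $(t,x)\mapsto\eta(t)\mathbb{J}^{T}\nabla\zeta(x)\in C_c((0,T)\times\RD;\RD)$ and $\mathbb{J}$ is a fixed matrix. This yields $\frac{d}{dt}\mu_t+\nabla\cdot(\mathbb{J}\mathcal{N}_t)=0$ in the distributional sense.

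The crucial step is to identify $\mathcal{N}_t$. Exactly as in the $N\to\infty$ passage inside Theorem \ref{thm-super} (the localization invoked there ``as shown in \cite{AG}''), a slice-wise localization gives, for a.e. $t$,
$$\mathcal{N}_t\in\bigcap_{m=1}^{\infty}\bar{co}\Bigl\{\frac{Id-\bar{\gamma}_{\mu_t^{\tau_n}}^{\nu_t^{\tau_n}}}{\tau_n}\,\mu_t^{\tau_n}:n\ge m\Bigr\}.$$
For such $t$, since (\ref{Condition:H1}) gives $W_2(\mu_t^{\tau_n},\nu_t^{\tau_n})\le C_o\tau_n$ and $\mu_t^{\tau_n}\to\mu_t$ narrowly, Lemma \ref{lemma:stability} rewrites the right-hand intersection as $\bigcap_m\bar{co}\{\frac{\bar{\gamma}_{\nu_t^{\tau_n}}^{\mu_t^{\tau_n}}-Id}{\tau_n}\nu_t^{\tau_n}:n\ge m\}$ and yields $\nu_t^{\tau_n}\to\mu_t$ narrowly. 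By Lemma \ref{MYSP} one has $\frac{\bar{\gamma}_{\nu_t^{\tau_n}}^{\mu_t^{\tau_n}}-Id}{\tau_n}\in\partial_-H(\nu_t^{\tau_n})\cap T_{\nu_t^{\tau_n}}\M$, so $\nu_t^{\tau_n}\in D(\partial_-H)$, and (H2) applied to $\nu_t^{\tau_n}\to\mu_t$ confines this intersection to $\{w\mu_t:w\in\partial_-H(\mu_t)\cap T_{\mu_t}\M\}$. Hence $\mathcal{N}_t=v_t\mu_t$ with $v_t\in\partial_-H(\mu_t)\cap T_{\mu_t}\M$, the map $t\mapsto v_t$ being Borel from the disintegration, and $\|v_t\|_{\mu_t}\le C_o$ by lower semicontinuity of the $L^2$-norm under this weak* limit; in particular $\|v_t\|_{\mu_t}\in L^1(0,T)$. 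Substituting into the continuity equation of the previous step shows $\mu$ solves (\ref{HF2}).

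The main obstacle is the slice-wise localization $\mathcal{N}_t\in\bigcap_m\bar{co}\{\cdots\}$ for a.e. $t$: this is the only non-routine measure-theoretic input, and it is precisely what transfers the global space-time weak* limit into the per-time convex-hull structure on which Lemma \ref{lemma:stability} and (H2) act. The accompanying $L^2$ lower semicontinuity needed to guarantee $v_t\in L^2(\mu_t)$ with integrable norm, the narrow compactness, and the passage to the limit in the distributional equation are all standard (cf. \cite{ags:book}, \cite{AG}).
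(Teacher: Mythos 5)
Your proposal is correct and follows essentially the same route as the paper: equi-Lipschitz compactness to extract a narrowly convergent subsequence, the convex-hull localization of the limiting momentum (which the paper also delegates to Step 2 of Theorem \ref{thm:flow1} and \cite{AG}), then Lemma \ref{lemma:stability} to pass from the $\mu_t^{\tau_n}$-side to the $\nu_t^{\tau_n}$-side convex hulls, and finally (H2) to identify $v_t\in\partial_-H(\mu_t)\cap T_{\mu_t}\M$. The extra detail you supply on the space-time measure extraction and the integrability of $\|v_t\|_{\mu_t}$ is consistent with, and fills in, what the paper cites by reference.
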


\begin{proof}
Since $t\mapsto \mu_t^{\tau}$ are equi-bounded in
$\M$ and equi-Lipschitz continuous,  we may assume that, for any $t\in[0,T]$,
$\mu_t^{\tau_n}$ converges narrowly to $\mu_t$ as $\tau_n
\rightarrow 0,$ for some subsequence $\tau_n.$ 

By the same reasoning as step 2 in the proof of Theorem \ref{thm:flow1},
$\mu_t$ solves
\begin{equation}\label{eq1:theorem:stability}
   \left \{
  \begin{array}{l}
  \frac{d}{dt}\mu_t + \nabla \cdot(\mathbb{J} v_t\mu_t) = 0, \qquad t \in (0 , T) \\
   \mu_0 = \bar{\mu},\\
  \end{array}
  \right.
\end{equation}
with
\begin{equation}\label{eq2:theorem:stability}
v_t \mu_t \in \bigcap_{M=1}^{\infty} \bar{co}\{  v_t^{\tau_n}
\mu_t^{\tau_n}: n \geq M \},
\end{equation}
 for a.e $t \in (0,T).$ Here
\begin{equation}\label{eq3:theorem:stability}
 v_t^{\tau_n}=\frac{Id-
\bar{\gamma}_{\mu_t^{\tau_n}}^{\nu_t^{\tau_n}}}{\tau_n} \in \partial^+
 H(\mu_t^{\tau_n}) \cap T_{\mu_t^{\tau_n}}\M, \qquad \nu_t^{\tau_n}\in \it{J}_{\tau_n}[\mu_t^{\tau_n}].
\end{equation}
 From Lemma \ref{lemma:stability} together with (\ref{eq2:theorem:stability}) and (\ref{eq3:theorem:stability}), we know  $\nu_t^{\tau_n}\rightarrow \mu_t$
 narrowly as $\tau_n \rightarrow 0$ and
 \begin{equation}\label{eq4:theorem:stability}
 v_t \mu_t
\in \bigcap_{M=1}^{\infty} \bar{co}\{ \xi_t^{\tau_n}
\nu_t^{\tau_n}: n \geq M \},
\end{equation}
where $\nu_t^{\tau_n}\in J_{\tau_n}[\mu_t^{\tau_n}]$ and
$\xi_t^{\tau_n} = \frac{ \bar{\gamma}_{\nu_t^{\tau_n}}^{\mu_t^{\tau_n}} -Id}{\tau_n} \in
\partial_{-}H(\nu_t^{\tau_n})\cap T_{\nu_t^{\tau_n}}\M.$

By (H2) and (\ref{eq4:theorem:stability}), we get
$$v_t \in \partial_{-}H(\mu_t)\cap T_{\mu_t}\M, \qquad a.e \quad t\in(0,T), $$
which concludes the proof.

\end{proof}

\subsection{Example}

  Let $\mu_o \in \mathcal{M}$ have a bounded support and $a>0,$ we define
 \begin{equation}\label{Example:H}
 H(\mu):= -\frac{a}{2}W_2^2(\mu,\mu_o) + \int V(x) d\mu(x) + \iint W(x,y)d\mu(x)d\mu(y),
 \end{equation}
where $V:\RD \rightarrow \R$ is $\lambda_V-$convex for some $\lambda_V\in \R$, and $W:\RD \times \RD \rightarrow \R$ is convex and even.
Assume also that both are differentiable and have at most quadratic
growth at infinity. Then, the function  $H: \mathcal{M} \rightarrow  (-\infty, \infty]$ as in (\ref{Example:H})
satisfies (H1) and (H2).

\begin{proof}
First, we notice that $H$ defined as (\ref{Example:H}) is $(\lambda_V-a)-$convex and locally Lipschitz(\cite{AG}), {\it i.e.} there exist
$R_{\bar{\mu}}, C_{\bar{\mu}}>0$ such that
\begin{equation}\label{eq1:Example}
H(\mu_1)-H(\mu_2) \leq C_{\bar{\mu}}W_2(\mu_1,\mu_2),
\end{equation}
for all $\mu_i$  with $W_2(\bar{\mu},\mu_i)< R_{\bar{\mu}},$ $i=1,2.$

Secondly, $H$ satisfies the convexity condition (\ref{eq1:Remark:H1}) with $\lambda=\lambda_V-a$ (Chapter 9 in \cite{ags:book}).
From Remark \ref{Remark:H1}, it follows that for all sufficiently small $\tau>0$ and $\forall \mu \in \M,$
there exists a unique $\mu_\tau\in J_\tau[\mu].$ Furthermore, for fixed $\tau,$ $\mu\mapsto \mu_\tau\in J_\tau[\mu]$ is continuous.

We now show there is $0<R_o<R_{\bar{\mu}}$ such that for all sufficiently small $\tau>0,$
\begin{equation}\label{eq2:Example}
if \quad W_2(\mu,\bar{\mu})< R_o \quad then \quad W_2(\bar{\mu},\mu_\tau)< R_{\bar{\mu}}.
\end{equation}
Once we have (\ref{eq2:Example}),  (\ref{eq1:Example}) with $\mu_\tau \in J_\tau[\mu]$ gives
\begin{align*}
\frac{1}{2\tau}W_2^2(\mu,\mu_\tau)   \leq  H(\mu) - H(\mu_\tau)\leq C_{\mu}W_2(\mu,\mu_\tau),
\end{align*}
for all $\mu$ with $W_2(\mu,\bar{\mu})< R_o.$ That is,
\begin{equation}\label{eq3:remark}
\frac{1}{\tau}W_2(\mu,\mu_\tau)\leq 2C_{\bar{\mu}},
\end{equation}
which implies that (\ref{Condition:H1}) in (H1) holds with $C_o=2C_{\bar{\mu}}.$

Now, let us prove (\ref{eq2:Example}). We define $R_o:= R_{\bar{\mu}}/2.$ If $W_2(\mu,\bar{\mu})< R_o$ then we have
\begin{align}\label{eq4:remark}\nonumber
\frac{1}{2\tau}W_2^2(\mu,\mu_\tau) + H(\mu_\tau) &\leq  H(\mu) \\\nonumber
&\leq H(\bar{\mu}) + C_{\bar{\mu}}W_2(\bar{\mu},\mu) \\
&\leq H(\bar{\mu}) + C_{\bar{\mu}}R_o:= C.
\end{align}
We need to estimate $H(\mu_\tau)$ in (\ref{eq4:remark}).  Since $V$ and $W$ have at most quadratic growth at infinity, there are constants $c_1$ and $c_2$ such that
\begin{equation}
|V(x)| \leq c_1|x|^2 + c_2 \quad {\it and} \quad |W(x,y)|\leq c_1(|x|^2+|y|^2) +c_2,
\end{equation}
for all $x,y\in \RD$. This implies
\begin{align}\label{eq5:remark}\nonumber
H(\mu_\tau)&= -\frac{a}{2}W_2^2(\mu_\tau,\mu_o) + \int V(x) d\mu_\tau(x) + \iint W(x,y)d\mu_\tau(x)d\mu_\tau(y)\\\nonumber
& \geq -\frac{a}{2}W_2^2(\mu_\tau,\mu_o) - 3c_1\int|x|^2  d\mu_\tau(x) - 2c_2\\
& = -\frac{a}{2}W_2^2(\mu_\tau,\mu_o) - 3c_1W_2^2(\mu_\tau,\delta_0) - 2c_2.
\end{align}
We combine (\ref{eq4:remark}) and (\ref{eq5:remark}), and get
\begin{equation}\label{eq6:remark}
\frac{1}{2\tau}W_2^2(\mu,\mu_\tau) \leq \frac{a}{2}W_2^2(\mu_\tau,\mu_o) + 3c_1W_2^2(\mu_\tau,\delta_0) + 2c_2 +C.
\end{equation}
Now let $\tau \rightarrow 0$ in (\ref{eq6:remark}). Since $\mu_o, \delta_0$ are fixed and $W_2(\bar{\mu},\mu)\leq R_o= R_{\bar{\mu}}/2$,
\begin{equation}
W_2(\mu,\mu_\tau) \rightarrow 0 \quad {\it uniformly \quad  w.r.t }\quad \mu\quad as \quad \tau \rightarrow 0,
\end{equation}
which implies (\ref{eq2:Example}).

To finish proving (H1), it remains to prove (\ref{Condition-2:H1}). Let $\tilde{H}: \M \rightarrow (-\infty,\infty]$ be defined by
$$\tilde{H}(\mu)= -\frac{a}{2}W_2(\mu,\mu_o)$$
For the Hamiltonian $\tilde{H},$ it was shown that (\ref{Condition-2:H1}) holds for some $k>0$ ({\it refer} \cite{Kim}).
It is easy to see (\ref{Condition-2:H1}) holds with same $k>0$ for the Hamiltonian $H$ as in
(\ref{Example:H}).

Hence, for the Hamiltonian $H$ defined by (\ref{Example:H}), the assumption (H1) holds  with $C_o=2C_{\bar{\mu}}$ and $R_o=R_{\bar{\mu}}/2$.
It was shown in \cite{AG} that the assumption (H2) also holds.

\end{proof}

{\bf Comments:}
Suppose we want to solve the finite dimensional Hamiltonian system which consists of a single particle
\begin{equation}\label{finite}
   \left \{
  \begin{array}{l}
 x''(t)= - \nabla V(x(t)) \\

 x'(0)=\bar{v}, x(0)=\bar{x}, \\
  \end{array}
  \right.
\end{equation}
where $V:\RD\rightarrow \R$ is given and $\bar{v},\bar{x}\in \RD.$ If $V$ is not everywhere differentiable then we may try a regularization scheme as
follows. We first solve the approximate system
\begin{equation}\label{finite-approx}
   \left \{
  \begin{array}{l}
 x_{\epsilon}''(t)= - \nabla V_{\epsilon}(x_{\epsilon}(t)) \\

 x_{\epsilon}'(0)=\bar{v}, x_{\epsilon}(0)=\bar{x}, \\
  \end{array}
  \right.
\end{equation}
where $V_{\epsilon}$ is any regular approximation of $V.$ For example, we can define $V_{\epsilon}:= \rho_{\epsilon}\ast V$ as the standard
mollification of $V$. Next we check if the solution $x_{\epsilon}(t)$ of (\ref{finite-approx}) converges to a solution $x(t)$ of (\ref{finite}) as
$\epsilon$ goes to zero. Of course, we need certain properties on $V$ to ensure this stability property hold. For example,
if $V$ is convex then the limiting solution $x(t)$ satisfies the differential inclusion $ x''(t)\in -\partial_-V(x(t))$  instead of
the first equation in (\ref{finite}).

Let us address the Hamiltonian system (\ref{eq1}) in the Wasserstein space. As we saw in the previous sections, under certain
conditions on the Hamiltonian $H$, the Hamiltonian system is stable with respect to the Moreau-Yosida approximation.
Therefore, we may apply the Moreau-Yosida approximation scheme to study non locally subdifferentiable Hamiltonians.

Let $\bar{\mu}\in \M$ and $\mathcal{B}$ be a neighborhood of $\bar{\mu}$ in the Wasserstein space.
Suppose our Hamiltonian $H$ is subdifferentiable  only in a proper subset $\mathcal{D} \subset \mathcal{B},$ and $\bar{\mu}\in \mathcal{D}$.
We want to solve the system (\ref{eq1}) with the initial measure $\bar{\mu}.$ To do this, we need an algorithm
to construct solutions which stay in the subset $\mathcal{D}.$

In Theorem \ref{thm-super}, we construct {\it approximate} solutions $\nu_\tau$ for $H$ as well as solutions $\mu_\tau$ for $H_\tau.$
Notice that we have $\nu_\tau \in \mathcal{D}$ and we need only the assumption (H1) on $H$ in Theorem \ref{thm-super}. Note, the assumption (H1) has
nothing to do with the subdifferentiability of $H,$ which means that the construction of approximate solutions $\nu_\tau$
  relies entirely on the Moreau-Yosida approximation method.
 Next, in Theorem \ref{thm-hf}, we add the assumption (H2) on $H$ which then implies the convergence of $\nu_\tau $ to a solution of the system (\ref{eq1}).
The assumption (H2) does not require our Hamiltonian $H$ to be subdifferentiable everywhere in the neighborhood $\mathcal{B}$ of $\bar{\mu}.$
Instead, it requires $\mathcal{D}$ is closed in the weak* topology and (\ref{H2:AG}) hold.

Hence, as a direct result of our stability result, the Moreau-Yosida approximation
scheme provides an algorithm to construct a solution of the system (\ref{eq1}) for Hamiltonians  which are subdifferentiable only in a proper
subset $\mathcal{D}$ of a neighborhood of $\bar{\mu}.$

%
%

%

\thanks{{\bf Acknowledgements} : I would like to acknowledge my gratitude to Jacob Bedrossian and Ryan Hynd for valuable suggestions.}



\end{document}